\newtheoremstyle{myremark} 
    {7pt}                    
    {7pt}                    
    {}  	                 
    {}                           
    {\bf}       	         
    {.}                          
    {.5em}                       
    {}  
\newenvironment{customthm}[1]
  {\innercustomthm}
  {\endinnercustomthm}
\newenvironment{customclaim}[1]
  {\innercustomclaim}
  {\endinnercustomclaim}
\theoremstyle{plain}
\newtheorem{lemma}{Lemma}[section]
\newtheorem{theorem}[lemma]{Theorem}
\newtheorem{definition}[lemma]{Definition}
\newtheorem{proposition}[lemma]{Proposition}
\newtheorem{conjecture}[lemma]{Conjecture}
\newtheorem{problem}[lemma]{Problem}
\newtheorem{notation}[lemma]{Notation}
\newtheorem*{claim}{Claim}
\theoremstyle{myremark}
\newtheorem{remark}[lemma]{Remark}
\newtheorem{example}[lemma]{Example}
\newcommand{\RR}{\mathbb{R}}
\newcommand{\ZZ}{\mathbb{Z}}
\newcommand{\CC}{\mathbb{C}}
\newcommand{\nices}{\mathcal{S}}
\newcommand{\nicea}{\mathcal{A}}
\newcommand{\niceb}{\mathcal{B}}
\newcommand{\niceo}{\mathcal{O}}
\newcommand{\htpyequiv}{\simeq}
\newcommand{\incl}{\hookrightarrow}
\renewcommand{\subset}{\subseteq}
\newcommand{\lk}{\mathrm{lk}}
\newcommand{\ost}{\mathrm{st}^o}
\newcommand{\diam}{\mathrm{diam}}
\newcommand{\rips}{\mathcal{R}}
\newcommand{\shadow}{\mathcal{S}}
\DeclareMathOperator{\conv}{\mathrm{conv}}
\begin{document}
\title[On homotopy types of Euclidean Rips complexes]{On homotopy types of Euclidean Rips complexes}
\author{Micha{\l} Adamaszek}
\address{Department of Mathematical Sciences, University of Copenhagen, Universitetsparken 5, 2100 Copenhagen, Denmark}
\email{aszek@mimuw.edu.pl}
\author{Florian Frick}
\address{Department of Mathematics, Cornell University, Ithaca, NY 14853, USA}
\email{ff238@cornell.edu}
\author{Adrien Vakili}
\address{Department of Mathematical Sciences, University of Copenhagen, Universitetsparken 5, 2100 Copenhagen, Denmark}
\email{adrienvakili@gmail.com}

\thanks{MA supported by VILLUM FONDEN through the network for Experimental Mathematics in Number
Theory, Operator Algebras, and Topology.}

\begin{abstract}
The Rips complex at scale $r$ of a set of points $X$ in a metric space is the abstract simplicial complex whose faces are determined by finite subsets of $X$ of diameter less than~$r$. We prove that for $X$ in the Euclidean $3$-space $\RR^3$ the natural projection map from the Rips complex of $X$ to its shadow in $\RR^3$ induces a surjection on fundamental groups. This partially answers a question of Chambers, de Silva, Erickson and Ghrist who studied this projection for subsets of~$\RR^2$. We further show that Rips complexes of finite subsets of $\RR^n$ are universal, in that they model all homotopy types of simplicial complexes PL-embeddable in~$\RR^n$. As an application we get that any finitely presented group appears as the fundamental group of a Rips complex of a finite subset of~$\RR^4$. We furthermore show that if the Rips complex of a finite   point set in $\RR^2$ is a normal pseudomanifold of dimension at least two then it must be the boundary of a crosspolytope.
\end{abstract}
\maketitle

\section{Introduction}
\label{sect:intro}

The Rips (or Vietoris--Rips) complex is an abstract simplicial complex which records the notion of pairwise proximity between the points of a metric space. First defined by Vietoris~\cite{Vietoris27}, and heavily used in geometric group theory, it has recently become one of the tools in computational algebraic topology, especially in the framework of persistent homology. There the main utility of the Rips complex, and other constructions such as the \v{C}ech complex, is in building continuous approximations to discrete point-cloud datasets, making it possible to study them with the toolbox of algebraic topology. In this spirit we restrict attention to the spaces $\RR^n$ equipped with the Euclidean metric. We denote the distance between two points $A,B$ by $|AB|$, the diameter of a set $Y\subseteq \RR^n$ by $\diam(Y)$ and its convex hull by $\conv(Y)$.

\begin{definition}[Rips complex]
The Rips complex $\rips(X;r)$ of a subset $X\subseteq \RR^n$ at distance scale $r>0$ is the abstract simplicial complex with vertex set $X$, such that a finite set $Y\subseteq X$ is a face of $\rips(X;r)$ if and only if $\diam(Y)< r$.
\end{definition}

Alternatively, one can define $\rips(X;r)$ as the clique (flag) complex of the graph with vertex set $X$ and edges $A\sim B$ whenever $|AB|<r$. It is easy to see that for $X\subseteq \RR^1$ each connected component of $\rips(X;r)$ is contractible, but surprisingly little is known about the homotopical properties of Rips complexes of subsets of $\RR^n$, $n\geq 2$. Motivated by applications in sensor networks, Chambers, de Silva, Erickson and Ghrist~\cite{chambers2010vietoris} studied Rips complexes of subsets of~$\RR^2$. They compare the fundamental groups of Rips complexes in $\RR^2$ and their \emph{shadows}, which we now define.

\begin{definition}[Shadow, projection map]
Suppose $X\subseteq \RR^n$. The shadow $\shadow(X;r)\subseteq\RR^n$ is the image of $\rips(X;r)$ under the projection map $p\colon\rips(X;r)\to\RR^n$ which sends each vertex to the corresponding point in $X$ and extends linearly to the simplices of $\rips(X;r)$. Equivalently, we have
\begin{equation}
\label{eq:shadow-def}
\shadow(X;r)=\bigcup_{\substack{Y\subset X,\ |Y|\leq n+1\\\diam(Y)< r}}\conv(Y).
\end{equation}
\end{definition}
\begin{figure}
\label{fig:rips-shadow}
\begin{tabular}{m{1.5in}m{0.2in}m{1.5in}}
\includegraphics[scale=0.8]{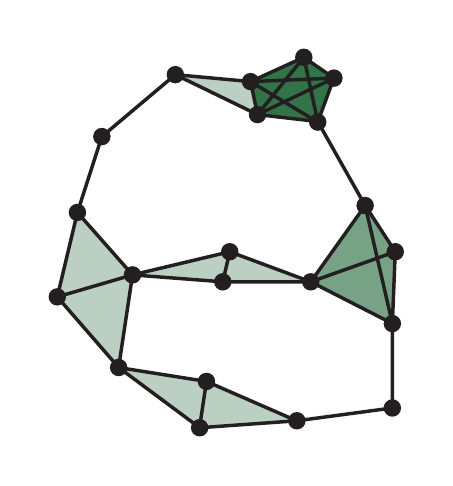}&  $\xrightarrow{\ p\ }$ &\includegraphics[scale=0.8]{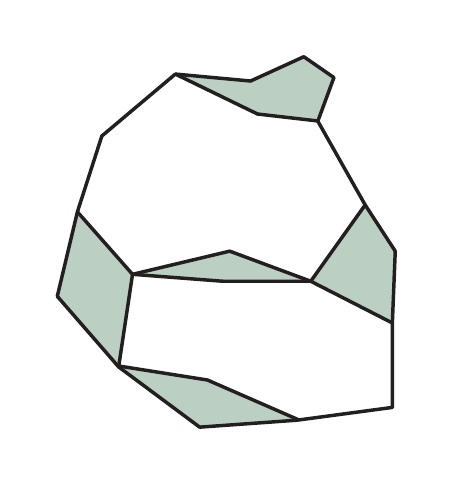} 
\end{tabular}
\caption{
A point set in $\RR^2$ (black dots), its Rips complex and the projection map from the Rips complex to the shadow.}
\end{figure}
See Figure~\ref{fig:rips-shadow} for an example. The restriction in \eqref{eq:shadow-def} to subsets with $|Y|\leq n+1$ is possible thanks to Carath\'e{}odory's theorem, which states that the convex hull of a finite set in $\RR^n$ is the union of convex hulls of its at most $(n+1)$-element subsets.

Since most of our discussion is scale-invariant, we usually set $r=1$ and write $\rips(X):=\rips(X;1)$ and $\shadow(X):=\shadow(X;1)$. The results of \cite{chambers2010vietoris} can be summarized as follows.

\begin{theorem}[\cite{chambers2010vietoris}]
\label{thm:chambers}
Let $X\subseteq \RR^n$ be a finite subset. The projection map $p\colon\rips(X)\to \shadow(X)$ 
\begin{itemize}
\item[(a)] is a $\pi_0$-isomorphism for all $n$,
\item[(b)] is a $\pi_1$-isomorphism when $n=2$,
\item[(c)] may fail to be $\pi_1$-surjective for each $n\geq4$.
\end{itemize}
\end{theorem}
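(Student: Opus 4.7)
I would treat the three parts in turn.

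For part (a), the $\pi_0$-surjection is immediate: every point of $\shadow(X)$ lies in some $\conv(Y)$ for a face $Y$ of $\rips(X)$ and is joined by a straight segment to any vertex of $Y$. For $\pi_0$-injectivity, I would take a path $\gamma\colon[0,1]\to\shadow(X)$ joining two vertices of $X$, cover $[0,1]$ by finitely many intervals $I_1,\ldots,I_m$ with $\gamma(I_k)\subseteq\conv(Y_k)$ for faces $Y_k$ of $\rips(X)$, and chain vertices of successive $Y_k$'s to produce a path in the $1$-skeleton of $\rips(X)$. The crucial lemma to prove is: if $Y,Y'$ are faces of $\rips(X)$ with $\conv(Y)\cap\conv(Y')\neq\emptyset$, then some $y\in Y$ and $y'\in Y'$ satisfy $|yy'|<1$. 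To establish it, translate an intersection point to the origin so that $0=\sum\lambda_iy_i=\sum\mu_jy'_j$, and observe that
\begin{align*}
\sum_{i,j}\lambda_i\mu_j|y_i-y'_j|^2 &= \sum_i\lambda_i|y_i|^2+\sum_j\mu_j|y'_j|^2 \\
&= \tfrac12\sum_{i,k}\lambda_i\lambda_k|y_i-y_k|^2+\tfrac12\sum_{j,l}\mu_j\mu_l|y'_j-y'_l|^2 < 1,
\end{align*}
since every squared pairwise distance within $Y$ or within $Y'$ is strictly less than $1$; hence $\min_{i,j}|y_i-y'_j|<1$.

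For part (b), working in $\RR^2$, I would prove $\pi_1$-surjectivity by first perturbing any loop $\alpha$ in $\shadow(X)$ to miss the finite vertex set $X$, then using that the planar shadow decomposes into images of triangles and edges of $\rips(X)$ to deform $\alpha$, one convex piece at a time, onto a polygonal loop in the geometric graph underlying the $1$-skeleton of $\rips(X)$, which lifts tautologically. For $\pi_1$-injectivity, given a nullhomotopy $D\colon B^2\to\shadow(X)$ of a loop $p\circ\ell$, I would finely triangulate $B^2$ so that each triangle's image lies in a single $\conv(Y)$, choose consistent vertex lifts into the corresponding simplices of $\rips(X)$, and extend simplicially. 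The key planar input ensuring consistency across shared edges is that any two straight-line paths between fixed endpoints in $\RR^2$ are homotopic rel endpoints through straight-line paths within the union of the two relevant convex pieces. The main obstacle is managing the interaction between the disk triangulation and the combinatorics of $\rips(X)$, and this is where restricting to dimension two is essential.

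For part (c), I would construct an explicit $X\subset\RR^n$, $n\geq 4$, with $\rips(X)$ simply connected but $\shadow(X)$ having nontrivial $\pi_1$. The strategy is to pick a distinguished vertex $O\in X$ with $|OP|<1$ for every other $P\in X$, so that $\rips(X)$ is a simplicial cone with apex $O$ and hence contractible, while arranging the remaining points so that the convex hulls of subsets of size at most $n+1$ which occur as faces of $\rips(X)$ cover an annular region with a hole that no simplex containing $O$ can fill, because $O$ together with the points on the far side of the hole would have diameter $\geq 1$. The extra room afforded by $n\geq 4$ provides the degrees of freedom needed to realize such a configuration; exhibiting a concrete example is the main obstacle of this part.
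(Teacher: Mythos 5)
Since the paper only quotes this theorem from \cite{chambers2010vietoris} rather than proving it, I am comparing your proposal against the ingredients of that proof which the paper does record, namely Proposition~\ref{prop:tlemma} and Remark~\ref{rem:dim4fail}. Your part (a) is correct and essentially complete: the averaging identity $\sum_{i,j}\lambda_i\mu_j|y_i-y'_j|^2=\sum_i\lambda_i|y_i|^2+\sum_j\mu_j|y'_j|^2$ (valid after translating an intersection point to the origin, since both barycentric combinations vanish) does show that faces of $\rips(X)$ with intersecting convex hulls have vertices at distance $<1$, and the chaining argument then gives the $\pi_0$-isomorphism in every dimension; the only small repair needed is that the sets $\conv(Y_k)$ are closed, so rather than a Lebesgue-number subdivision you should argue that the Rips-component associated to $\gamma(t)$ is locally constant in $t$.

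Part (c) is a genuine gap: your strategy provably cannot work, and the obstruction is Lemma~\ref{lem:stars} of this very paper. If $|OP|<1$ for every $P\in X\setminus\{O\}$, then, because $\rips(X)$ is a clique complex, every face $Y$ extends to the face $Y\cup\{O\}$ (all pairwise distances in $Y\cup\{O\}$ are $<1$). Hence for every $x\in\conv(Y)\subseteq\shadow(X)$ the segment $\conv\{x,O\}$ lies in $\conv(Y\cup\{O\})\subseteq\shadow(X)$, so $\shadow(X)$ is star-shaped with center $O$ and therefore contractible --- in the paper's notation your hypothesis forces $X=X^O$, and Lemma~\ref{lem:stars} asserts exactly that $\shadow(X^O)$ is contractible. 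No amount of room in $\RR^n$, $n\geq 4$, can produce the annular shadow you describe: the flaw is the sentence claiming that no simplex containing $O$ can fill the hole, since filling does not require one large simplex; the union of the cones $\conv(Y\cup\{O\})$ over all faces $Y$ already fills everything. The actual counterexample of \cite{chambers2010vietoris}, recalled in Remark~\ref{rem:dim4fail}, is of the opposite kind: six points in $\RR^4$ with $|v_iv_{i+3}|>1$ and all other distances $<1$, so that \emph{no} vertex dominates, $\rips(X)$ is the boundary of the octahedron (hence simply connected), and the shadow fails to be simply connected.

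Part (b) is a plan rather than a proof, and it omits the one ingredient that makes dimension two special: Proposition~\ref{prop:tlemma}, that two crossing segments of length $<1$ have an endpoint within distance $<1$ of all the other endpoints. The difficulty in your outline is concentrated at points $x\in\conv(Y)\cap\conv(Y')$ where $Y\cap Y'=\emptyset$: your lemma from (a) supplies an edge of $\rips(X)$ between some $u\in Y$ and $u'\in Y'$, but one must then show that the loop formed by this edge together with segments through $x$ is null-homotopic in the shadow, and that is precisely what Proposition~\ref{prop:tlemma} delivers in the plane and what fails in $\RR^4$ (the loop of Remark~\ref{rem:dim4fail} is exactly of this type). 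The ``key planar input'' you propose instead --- homotopy of straight-line paths inside a union of two convex sets --- is dimension-independent, so if your outline closed up as stated it would prove $\pi_1$-surjectivity in all dimensions, contradicting part (c). Any correct proof of (b) must invoke a planar-specific crossing lemma, and your proposal never identifies one.
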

The main result of \cite{chambers2010vietoris} is of course part (b), the $\pi_1$-isomorphism in the planar case. Our first theorem generalizes half of this statement to dimension three.

\begin{customthm}{I}
\label{theoremA}
For any finite subset $X\subseteq \RR^3$ the projection map $p\colon\rips(X)\to \shadow(X)$ is a $\pi_1$-surjection.
\end{customthm}
Because of part (c), this was the borderline case for $\pi_1$-surjectivity of $p$. The proof of Theorem~\ref{theoremA} appears in Section~\ref{sect:local}. We review the necessary homotopy theory prerequisites in Section~\ref{sect:topology} and prepare some $3$-dimensional geometry in Section~\ref{sect:geometry}.  The proof is inductive. It isolates the local geometric properties of shadows from a more generic algebro-topological framework and we hope that it  
can be pushed further and used in other circumstances.

\smallskip
Part (b) of Theorem~\ref{thm:chambers} implies that for $X\subseteq \RR^2$ the group $\pi_1(\rips(X))$ is free. To our best knowledge this is the only known non-trivial piece of information about the homotopy types of Rips complexes of subsets of Euclidean spaces. In contrast, we will show that Rips complexes of subsets of $\RR^n$ are rich enough to model all homotopy types of finite polyhedra in the same~$\RR^n$.

\begin{customthm}{II}
\label{theoremB}
For every finite simplicial complex $K$ that is PL-embeddable in $\RR^n$ there exists a finite subset $X\subseteq \RR^n$ with a homotopy equivalence $\rips(X)\htpyequiv K$. 
\end{customthm}

This result is shown in Section~\ref{sect:universal}, using a combination of the Nerve Lemma with the techniques due to Hausmann~\cite{Hausmann1995} and Latschev~\cite{Latschev2001}. It will immediately imply the following corollary, which we prove also in Section~\ref{sect:universal}.

\begin{customthm}{III}
\label{theoremC}
For every finitely presented group $G$ there exists a finite subset $X\subseteq \RR^4$ such that $\pi_1(\rips(X))=G$.
\end{customthm}

It is likely that also in dimension three the projection $p$ is a $\pi_1$-isomorphism (see Section~\ref{sect:closing}). That would strengthen Theorem~\ref{theoremA}, fully answering the question of~\cite{chambers2010vietoris}. It would also place extra restrictions on $\rips(X)$ for $X\subseteq \RR^3$, such as torsion-freeness of~$H_1(\rips(X);\ZZ)$. At the moment these questions about dimension three remain open.

\smallskip
Going back to dimension two, we address the question raised in \cite[6.(1)]{chambers2010vietoris}, of whether  the only significant examples of homology in dimensions $d\geq 2$ of a planar Rips complex can be generated by ``local'' point configurations. Although we are not able to resolve this problem completely, we show that this is the case for homology generators corresponding to induced normal pseudomanifolds in a planar Rips complex. We will prove the following theorem in Section~\ref{sect:submanifold}.

\begin{customthm}{IV}
\label{theoremD}
Suppose that $X\subset \RR^2$ is a finite subset, $d\geq 2$ and that $\rips(X)$ contains a $d$-dimensional  normal pseudomanifold $K$ as an induced subcomplex. Then $K$ is isomorphic to the boundary of the $(d+1)$-crosspolytope.
\end{customthm}

\begin{remark}
\label{rem:nonstrict}
The Rips complex can also be defined using a non-strict inequality ${\diam(Y)\leq r}$ (this definition was used in \cite{chambers2010vietoris}). If $X$ is finite then the non-strict version can be treated as a strict one by slightly increasing the distance scale and vice-versa. Thus all the results of this paper concerning finite sets $X$ and all results of \cite{chambers2010vietoris} hold in both settings.
Only in Lemma~\ref{lem:crushing} and Theorem~\ref{thm:infinite} we deal with Rips complexes for infinite $X$, and we rely on a theorem of Hausmann~\cite{Hausmann1995} which holds only in the strict setting. In general, for infinite sets $X\subseteq\RR^n$, the strict Rips complexes tend to be better behaved than the non-strict ones; for instance the latter can have homology groups of uncountable rank while the former cannot~\cite[Section~5.2]{chazal2014persistence}.
\end{remark}

\subsection*{Acknowledgements}
We thank Jesper M. M\o{}ller for helpful discussions and for suggesting the collaboration of the first and third author.
Some of this research was performed while the second author visited the University of Copenhagen. The second
author is grateful for the hospitality of the Department of Mathematical Sciences there.

\section{Topological preliminaries}
\label{sect:topology}
We follow standard simplicial terminology, see for instance~\cite{Kozlov}, without distinguishing between an abstract simplicial complex and its geometric realization.

\begin{notation}
\label{notation:decomp}
For $x\in \RR^n$ we denote by $B(x,r)$ the open ball around $x$ of radius~$r$.

For a subset $X\subseteq\RR^n$ and $v\in X$ we define \begin{eqnarray*}
X_v&=&(X-v)\cap B(v,1),\\
X^v&=&X \cap B(v,1)\ =\ X_v\cup\{v\}.
\end{eqnarray*}
\end{notation}
This notation is chosen so that $\rips(X_v)$ is the link and $\rips(X^v)$ is the star of $v$ in $\rips(X)$. We have $X=X^v\cup (X-v)$ and $X^v\cap (X-v)=X_v$. 

\begin{lemma}
\label{lem:stars}
For any $X\subseteq \RR^n$ and $v\in X$ the spaces $\rips(X^v)$ and $\shadow(X^v)$ are contractible.
\end{lemma}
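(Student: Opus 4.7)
The plan is to prove the two contractibility statements separately, by identifying an obvious ``center'' in both cases, namely the vertex $v$ itself. Since $X^v = X \cap B(v,1)$ with $B(v,1)$ being the open ball, every $w \in X^v$ satisfies $|vw| < 1$, and this is the single feature that I will exploit in both arguments.

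For the Rips complex, I will show that $v$ is a cone point of $\rips(X^v)$. Given any face $F$ of $\rips(X^v)$, so that $\diam(F) < 1$, I need to check that $F \cup \{v\}$ is again a face. But $\diam(F \cup \{v\}) = \max\bigl(\diam(F),\, \max_{w \in F}|vw|\bigr)$, and both quantities are strictly less than $1$ (the first by assumption, the second by the defining property of $X^v$). Hence $\rips(X^v)$ is a cone with apex $v$, and in particular contractible.

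For the shadow I will show that $\shadow(X^v)$ is star-shaped with respect to $v$. Take $p \in \shadow(X^v)$ and write $p \in \conv(Y)$ for some $Y \subseteq X^v$ with $|Y| \leq n+1$ and $\diam(Y) < 1$. For $t \in [0,1]$ the point $tv + (1-t)p$ lies in $\conv(Y \cup \{v\})$. The set $Y \cup \{v\}$ still has diameter strictly less than $1$, by exactly the same argument as in the cone-point step above. The only subtlety is that $Y \cup \{v\}$ may have $n+2$ elements, so it does not directly appear in the Carath\'eodory-style definition \eqref{eq:shadow-def} of the shadow. I would resolve this by invoking Carath\'eodory's theorem once more: $\conv(Y \cup \{v\})$ is the union of $\conv(Y')$ over subsets $Y' \subseteq Y \cup \{v\}$ with $|Y'| \leq n+1$, and each such $Y'$ satisfies $\diam(Y') \leq \diam(Y \cup \{v\}) < 1$, so $\conv(Y') \subseteq \shadow(X^v)$. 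Therefore $tv + (1-t)p \in \shadow(X^v)$, which establishes star-shapedness and hence contractibility.

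The only non-trivial step is this last Carath\'eodory application for the shadow; the Rips part is essentially immediate. Everything else is bookkeeping around the definitions of $X^v$, $\rips$ and $\shadow$.
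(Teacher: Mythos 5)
Your proof is correct and takes essentially the same approach as the paper, whose entire proof consists of the two assertions that $\rips(X^v)$ is a cone with apex $v$ and that $\shadow(X^v)$ is a star-shaped subset of $\RR^n$ with center $v$. You simply verify these two assertions in detail, including the (correct) Carath\'eodory step for the shadow that the paper leaves implicit.
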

\begin{proof}
The simplicial complex $\rips(X^v)$ is a cone with apex $v$. The set $\shadow(X^v)$ is a star-shaped subset of $\RR^n$ with center~$v$.
\end{proof}

If $X,Y\subseteq \RR^n$ then $\rips(X\cap Y)=\rips(X)\cap\rips(Y)$. Shadows always satisfy the inclusion $\shadow(X\cap Y)\subseteq \shadow(X)\cap\shadow(Y)$, but in general it is not an equality. However, the projection map~$p$ and the decomposition of \ref{notation:decomp} fit into the following commutative diagram:
\begin{equation}
\label{eq:diagram}
\begin{gathered}
\xymatrix@R-1pc@C-1pc{
           & \rips(X_v)\ar@{^{(}->}[rr]\ar@{_{(}->}[dl]\ar^<<<<<p[dd]  &          & \rips(X^v)\ar@{_{(}->}[dl]\ar^p[ddd]\\
\rips(X-v)\ar@{^{(}->}[rr]\ar^p[ddd] &             & \rips(X) \ar^p[ddd]&           \\
           & \shadow(X_v)\ar@{_{(}->}[d]&          &           \\
           & \shadow(X^v)\cap\shadow(X-v)\ar@{^{(}->}[rr] \ar@{_{(}->}[dl] &          & \shadow(X^v)\ar@{_{(}->}[dl]\\
\shadow(X-v)\ar@{^{(}->}[rr] &             & \shadow(X) &          
}
\end{gathered}
\end{equation}
Both the top and the bottom face of the diagram are pushouts in the category of topological spaces (recall that $\rips(X_v)=\rips(X^v)\cap\rips(X-v)$). This puts us in position to study the effects of $p$ inductively. 
For this, we recall some standard homotopy-theoretic notions. Let $k\geq 0$. A continuous map $f\colon X\to Y$ is called \emph{$k$-connected} if the induced map $f_\ast\colon\pi_i(X,x)\to\pi_i(Y,f(x))$ is a bijection for all $0\leq i< k$ and a surjection for~$i=k$, for all choices of basepoint $x\in X$.\footnote{The authors of \cite{chambers2010vietoris} use the term $k$-connected to describe the situation when the induced map on $\pi_k$ is also a bijection, although it is more standard to call the latter a \emph{$k$-equivalence}.}
A \emph{cellular triad} is a triple $(X;A,B)$ of CW-complexes such that $X=A\cup B$. A map $f\colon (X;A,B)\to(Y;C,D)$ of cellular triads is a map $f\colon X\to Y$ such that $f(A)\subseteq C$ and $f(B)\subseteq D$. 

\begin{proposition}[\protect{\cite[Thm. 6.7.9]{tomDieck}}]
\label{prop:mayer}
Let $k\geq 1$. Suppose $f\colon (X;A,B)\to(Y;C,D)$ is a map of cellular triads. If $f\colon A\to C$ and $f\colon B\to D$ are $k$-connected and $f\colon A\cap B\to C\cap D$ is $(k-1)$-connected then $f\colon X\to Y$ is $k$-connected.
\end{proposition}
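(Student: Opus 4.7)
The plan is to convert $f$ into a cofibration using the mapping cylinder, translate $k$-connectedness into vanishing of relative homotopy groups, and then carry out a Mayer--Vietoris style cell-by-cell null-homotopy argument.

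First I would replace $Y$ with the mapping cylinder $M_f$, which contains $X$ at time $0$ and $Y$ at time $1$ and deformation retracts onto $Y$. Setting $\tilde C = M_{f|_A}$ and $\tilde D = M_{f|_B}$, one gets $M_f = \tilde C \cup \tilde D$ with $\tilde C \cap \tilde D = M_{f|_{A \cap B}}$; these subcylinders deformation retract to $C$, $D$, and $C \cap D$ respectively, so the hypotheses translate into the vanishing of $\pi_j(\tilde C, A)$ and $\pi_j(\tilde D, B)$ for $j \le k$ and of $\pi_j(\tilde C \cap \tilde D, A \cap B)$ for $j \le k - 1$. The inclusion $X \hookrightarrow M_f$ is now cellular, and $f$ being $k$-connected is equivalent to $\pi_j(M_f, X, x) = 0$ for all $0 \le j \le k$ and all basepoints $x \in X$. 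Crucially, the construction yields the identities $X \cap \tilde C = A$, $X \cap \tilde D = B$, and $X \cap \tilde C \cap \tilde D = A \cap B$, which is exactly what allows the successive stages below to chain together.

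Given a representative $\phi \colon (D^j, S^{j-1}) \to (M_f, X)$ with $j \le k$, I would first apply cellular approximation to equip $D^j$ with a CW-structure for which $W_C := \phi^{-1}(\tilde C)$ and $W_D := \phi^{-1}(\tilde D)$ are subcomplexes with $W_C \cap W_D = \phi^{-1}(\tilde C \cap \tilde D)$. Then I would null-homotope $\phi$ rel $S^{j-1}$ in three stages. First, using $\pi_i(\tilde C \cap \tilde D, A \cap B) = 0$ for $i \le k - 1$ together with $\dim(W_C \cap W_D) \le j - 1 \le k - 1$, push $\phi|_{W_C \cap W_D}$ into $A \cap B$ rel its intersection with $S^{j-1}$ (which already maps to $A \cap B$) by the standard cell-by-cell extension. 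Next, the restriction $\phi|_{W_C}$ now sends its ``boundary'' $(W_C \cap W_D) \cup (W_C \cap S^{j-1})$ entirely into $A$, so $\pi_i(\tilde C, A) = 0$ for $i \le k$ together with $\dim W_C \le j \le k$ allows us to push $\phi|_{W_C}$ rel this boundary into $A$. Finally, perform the symmetric deformation for $\phi|_{W_D}$ using $\pi_i(\tilde D, B) = 0$ for $i \le k$. Composing these homotopies yields a null-homotopy of $\phi$ rel $S^{j-1}$, establishing $\pi_j(M_f, X) = 0$.

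The delicate part is the bookkeeping that each push leaves the input for the next stage already in the correct subspace; the mapping-cylinder identities listed above are precisely what guarantees this, so the proof reduces to the standard relative cellular extension principle. The reason the hypothesis requires only $(k - 1)$-connectedness on $A \cap B \to C \cap D$, rather than full $k$-connectedness, is that $W_C \cap W_D$ sits in $D^j$ in codimension one, so its dimension is bounded by $j - 1$ instead of $j$; this is the homotopical counterpart of the dimension shift appearing in the Mayer--Vietoris sequence.
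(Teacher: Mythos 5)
The paper never proves this proposition---it is quoted verbatim from tom Dieck's book---so the only question is whether your argument stands on its own. It does not: it fails at exactly the step you single out as the crux, namely the bound $\dim(W_C\cap W_D)\le j-1$. With your definitions $W_C=\phi^{-1}(\tilde C)$, $W_D=\phi^{-1}(\tilde D)$, there is no reason for the interface to have codimension one: $\tilde C\cap\tilde D$ is just some subcomplex of $M_f$, not a separating hypersurface, there is no transversality for maps of CW complexes, and $\phi^{-1}(\tilde C\cap\tilde D)$ can perfectly well be all of $D^j$. Concretely, let $X=S^1\subseteq Y=D^2$ (so the cylinder may be replaced by $Y$ itself), with $A,B$ the left and right closed arcs and $C,D$ the left and right closed half-disks; then $A\to C$ and $B\to D$ are homotopy equivalences and $A\cap B=S^0\to C\cap D$ (the vertical diameter) is $0$-connected, so the hypotheses hold with $k=1$. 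Take $\phi\colon(D^1,S^0)\to(Y,X)$ to be the vertical diameter. Then $W_C=W_D=W_C\cap W_D=D^1$, and your first stage asks for a homotopy of $\phi$, rel its endpoints, through maps into $C\cap D$, ending in $A\cap B=S^0$. That is impossible: $[\phi]\neq0$ in $\pi_1(C\cap D,A\cap B)$, precisely because $A\cap B\to C\cap D$ is only $0$-connected. Yet $[\phi]=0$ in $\pi_1(Y,X)$, as the theorem asserts---the compression must use the half-disk $C$, i.e.\ the $k$-connectivity of $(C,A)$, which your staging never brings to bear on this piece. A secondary (related) error: cellular approximation does not make preimages of subcomplexes into subcomplexes; preimages of subcomplexes under cellular maps are merely closed sets.

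The correct mechanism is combinatorial, not geometric. One triangulates $(D^j,S^{j-1})$ finely enough that every closed simplex maps into $\tilde C$ or into $\tilde D$ (since $\{\tilde C,\tilde D\}$ is a closed cover this already needs care: either thicken to open neighborhoods that deformation retract compatibly onto $\tilde C$, $\tilde D$ and $\tilde C\cap\tilde D$, or first make everything simplicial and apply simplicial approximation). Then one defines $W_C$ to be the union of all closed simplices mapping into $\tilde C$, and $W_D$ to be the closure of the union of the \emph{remaining} simplices. Because each $j$-simplex is assigned to exactly one side, $\dim(W_C\cap W_D)\le j-1$ now holds automatically, while one still has $\phi(W_C)\subseteq\tilde C$, $\phi(W_D)\subseteq\tilde D$, $\phi(W_C\cap W_D)\subseteq\tilde C\cap\tilde D$; in the example above this rule puts all of $D^1$ into $W_C$ with empty interface, and the class dies via $(C,A)$ as it should. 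With that decomposition, your three stages (compress the interface using $(k-1)$-connectivity, extend by the homotopy extension property of the subcomplex pairs keeping values in $\tilde C$ resp.\ $\tilde D$, then compress each side using $k$-connectivity) do go through. So the skeleton of your proof is the standard one, but the justification you offer for the one step where the $(k-1)$-hypothesis enters is wrong, and as written the argument proves strictly less than the statement.
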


\begin{figure}
\label{fig:Xv-example}
\includegraphics[scale=1]{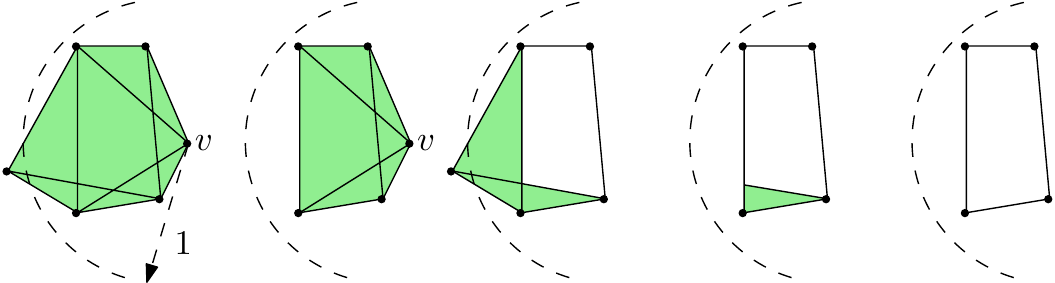}
\caption{From left to right: $\shadow(X)$, $\shadow(X^v)$, $\shadow(X-v)$,  $\shadow(X^v)\cap\shadow(X-v)$ and $\shadow(X_v)$ for a six-point set $X\subseteq \RR^2$.}
\end{figure}

The diagram \eqref{eq:diagram} determines a map of cellular triads $$p\colon(\rips(X);\rips(X-v),\rips(X^v))\to(\shadow(X);\shadow(X-v),\shadow(X^v)).$$ 
An inductive argument using Proposition~\ref{prop:mayer} reduces the question of $1$-connectivity of $p$ to the question of $0$-connectivity of the inclusion map $\shadow(X_v)\incl \shadow(X^v)\cap \shadow(X-v)$. We will study this map in Section~\ref{sect:local}. The geometric input for its analysis is provided in the next section.

\section{Some 3D Euclidean geometry}
\label{sect:geometry}

The arguments in \cite{chambers2010vietoris} rely to a large extent on the following property of the plane.

\begin{proposition}[\protect{\cite[Prop. 2.1]{chambers2010vietoris}}]
\label{prop:tlemma}
Suppose $A,B,P,Q\in\RR^2$ are points such that $|AB|<1$, $|PQ|<1$ and the segments $\conv\{A,B\}$ and $\conv\{P,Q\}$ intersect. Then one of the points $A,B,P,Q$ is in distance less than $1$ from all the remaining ones.
\end{proposition}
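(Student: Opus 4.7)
My plan is to show that the point in $\{A,B,P,Q\}$ closest to the intersection of the two segments is within distance less than $1$ of each of the other three. First I would fix any $S\in \conv\{A,B\}\cap \conv\{P,Q\}$, which is non-empty by hypothesis, and pick $X\in\{A,B,P,Q\}$ minimising $|XS|$. After possibly relabelling the two segments and their endpoints, I may assume $X=A$, so that $|AS|\leq\min(|BS|,|PS|,|QS|)$.

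The rest is a short triangle-inequality argument through $S$. Since $S$ lies on $\conv\{P,Q\}$ we have $|PS|+|QS|=|PQ|<1$, and combining this with $|QS|\geq|AS|$ gives $|PS|+|AS|\leq |PS|+|QS|<1$, hence
\[
|AP|\leq |AS|+|PS|<1.
\]
Symmetrically, using $|PS|\geq|AS|$ in place of $|QS|\geq|AS|$ yields $|AQ|<1$; and $|AB|<1$ holds by hypothesis. Thus $A$ is within distance less than $1$ of each of $B$, $P$ and $Q$, as required.

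I do not expect any serious obstacle here. The argument uses only the triangle inequality, the additivity of distances along each segment at $S$, and the minimality of $|AS|$ among the four points. Planarity of the ambient space enters only through the hypothesis that the two segments intersect in the first place; once $S$ has been chosen, the same argument goes through verbatim in any $\RR^n$.
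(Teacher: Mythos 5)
Your proof is correct. Note that the paper itself does not spell out a proof of Proposition~\ref{prop:tlemma}: it cites \cite[Prop.~2.1]{chambers2010vietoris} and remarks only that the proof is ``an exercise in manipulations with the triangle inequality,'' which is exactly what you have supplied. Your particular route---fix an intersection point $S$, choose among the four points one minimizing the distance to $S$, and exploit additivity of distance along the \emph{other} segment, $|PS|+|SQ|=|PQ|$---is a clean variant of the usual argument (which instead bounds $|AP|+|BQ|\leq|AB|+|PQ|<2$ and $|AQ|+|BP|\leq|AB|+|PQ|<2$ and then runs a short case analysis); both are sound, and yours has the merit of producing the apex directly. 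As you observe, your argument works verbatim in $\RR^n$, and in fact in any normed space, since it uses only the triangle inequality and additivity along straight segments. It is worth contrasting this with how the paper handles its three-dimensional generalization, Proposition~\ref{prop:beer}, which subsumes the present statement: there the authors argue via convexity of the sets $\{X : |XS|<|XT|\}$ (perpendicular-bisector half-spaces) rather than via the triangle inequality alone, and Remark~\ref{rem:funky-metric} shows this is unavoidable, since Proposition~\ref{prop:beer} fails for certain non-Euclidean norms on $\RR^3$. Your closest-point argument does not extend to that setting: if the minimizer of distance to $S$ lies on the segment $\{P,Q\}$, additivity is unavailable on the triangle side, and indeed no argument using only the triangle inequality can succeed there. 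For the planar two-segment statement as posed, however, your proof is complete.
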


The proof is an exercise in manipulations with the triangle inequality. Our next key proposition extends this observation to configurations in~$\RR^3$. The proof, however, is not a simple extension of the proof in~\cite{chambers2010vietoris}, since there are metrics in $\RR^3$ for which the result does not hold, and so the triangle inequality itself is not sufficient (Remark~\ref{rem:funky-metric}). At the same time the result subsumes~\cite[Prop. 2.1 and 2.2]{chambers2010vietoris} for the Euclidean metric in~$\RR^2$.

\begin{proposition}
\label{prop:beer}
Suppose $A,B,C,P,Q$ are points in $\RR^3$ (not necessarily distinct), such that
$|AB|,|BC|,|CA|,|PQ|<1$. Suppose further that $\conv\{P,Q\}\cap\conv\{A,B,C\}\neq\emptyset$. Then one of the points $A,B,C,P,Q$ is in distance less than $1$ from all the remaining ones.
\end{proposition}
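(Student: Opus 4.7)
The plan is to pick a point $R\in\conv\{P,Q\}\cap\conv\{A,B,C\}$ and argue by where $R$ lies. If $R$ is an endpoint of $\overline{PQ}$, say $R=P$, then $P\in\conv\{A,B,C\}$; writing $P=\alpha A+\beta B+\gamma C$ with $\alpha+\beta+\gamma=1$ and estimating $|PV|\leq\alpha|AV|+\beta|BV|+\gamma|CV|<1$ for each vertex $V\in\{A,B,C\}$ shows that $P$ is within distance less than $1$ of every other point. If $R$ is a vertex of $\conv\{A,B,C\}$, say $R=A$, then $|PA|+|AQ|=|PQ|<1$ forces $|PA|,|AQ|<1$, and combined with $|AB|,|AC|<1$ makes $A$ universal.

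In the remaining configurations $R$ lies in the relative interior of $\overline{PQ}$ and either on an edge or in the interior of $\conv\{A,B,C\}$. The main tool is a vertex-by-vertex reduction to the planar case. For each $V\in\{A,B,C\}$ not on the line $PQ$, the plane $\Pi_V$ spanned by $V$ and the line $PQ$ meets the plane of $A,B,C$ in a line through $V$ and $R$; this line meets the opposite edge of the triangle at a point $M_V$, and in $\Pi_V$ the segments $\overline{VM_V}$ and $\overline{PQ}$ cross at $R$. Writing $M_V=(1-s)V'+sV''$ for the two other vertices $V',V''$ yields $|VM_V|\leq(1-s)|VV'|+s|VV''|<1$, so Proposition~2.1 of~\cite{chambers2010vietoris} applies in $\Pi_V$ and returns a point of $\{V,M_V,P,Q\}$ at distance less than $1$ from the remaining three members of this $4$-set. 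If that point is $V$ itself, then $|VP|,|VQ|<1$ and $V$ is globally universal; and if the same endpoint of $PQ$, say $P$, is returned as universal for all three choices of $V$, then $|PA|,|PB|,|PC|<1$ and $P$ is globally universal.

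The main obstacle is the residual case in which the planar universal points are the auxiliary intercepts $M_V$, or in which $P$ wins some planar applications while $Q$ wins the others. Since Remark~\ref{rem:funky-metric} shows that the analogous statement can fail for non-Euclidean metrics on $\RR^3$, the Euclidean structure must enter nontrivially at this point. My plan is to exploit that each $M_V$ is an explicit convex combination of the two endpoints of its edge: the bound $|PM_V|<1$ together with the law of cosines along that edge (and $|V'V''|<1$) should constrain the distances $|PV'|,|PV''|$ sufficiently that, after propagating the constraints across the three planar conclusions and pairing them with the planar argument handling the case $R$ lies on an edge, some vertex of $\{A,B,C,P,Q\}$ is forced to be within distance less than $1$ of the remaining four. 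This coordinate-level case analysis in the plane of $A,B,C$ is the step where I expect the proof to become the most technically delicate.
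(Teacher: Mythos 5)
There is a genuine gap, and it sits exactly where you predicted it would: the residual case is not a technical loose end but the entire content of the proposition, and your proposal does not close it. When you apply the planar proposition to the four points $\{V, M_V, P, Q\}$ in the plane $\Pi_V$, the lemma is only an existence statement: it guarantees that \emph{some} point of this quadruple is within distance $1$ of the other three, and whenever $|VP|\geq 1$ and $|VQ|\geq 1$ the only candidate is the auxiliary point $M_V$ (since $V$, $P$, $Q$ each need $|VP|<1$ or $|VQ|<1$ to be universal in the quadruple). In that situation the planar lemma returns no information whatsoever about the five original points. Concretely, if $A$ is far from both $P$ and $Q$, your application with $V=A$ is vacuous; and a split outcome such as ``$P$ wins for $V=A$, $Q$ wins for $V=B$, $M_C$ wins for $V=C$'' gives only $|PA|<1$ and $|QB|<1$, from which no apex can be extracted. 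Your proposed repair --- propagating constraints through the law of cosines along the edges --- is stated as a plan (``should constrain'', ``I expect'') rather than carried out, and there is no argument that it terminates in a proof. Since Remark~\ref{rem:funky-metric} shows the statement is false for some metrics on $\RR^3$, whatever closes this case must use Euclidean geometry in an essential way, so it cannot be waved through as routine.

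For comparison, the paper avoids auxiliary points entirely. Its key step is the claim: if $|PB|\geq 1$ and $|PC|\geq 1$ then $|QA|<1$. This is proved by contradiction using the perpendicular bisector plane $AP^\perp$ of the segment $AP$: the inequalities $|PQ|<|AQ|$, $|AB|<|PB|$, $|AC|<|PC|$ place $Q$ strictly on $P$'s side and $B,C$ strictly on $A$'s side, so $\conv\{P,Q\}$ and $\conv\{A,B,C\}$ are separated by $AP^\perp$, contradicting the intersection hypothesis. (The only Euclidean input is that $\{X : |XS|<|XT|\}$ is a half-space, hence convex --- see Remark~\ref{rem:convex-section}.) The proposition then follows from a short pigeonhole argument: if no vertex of the triangle is an apex, two of them are far from the same endpoint of the segment, the claim yields $|QA|<1$, whence $|PA|\geq 1$, and three applications of the claim make $Q$ the apex. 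Your degenerate cases ($R$ an endpoint of $\overline{PQ}$ or a vertex of the triangle) are handled correctly, and the cevian setup with $|VM_V|<1$ is sound as far as it goes, but to complete your route you would need, in effect, a separation lemma of the paper's type anyway --- at which point the auxiliary points $M_V$ become unnecessary.
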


\begin{definition}
\label{def:apex}
We call any point which satisfies the conclusion of the proposition an \emph{apex} of the configuration $\{A,B,C,P,Q\}$.
\end{definition}

\begin{proof}
If $P$ (or $Q$) coincides with one of the points $A,B,C$ then it is an apex and we are done. Otherwise we start with the following claim.
\begin{claim}
If $|PB|,|PC|\geq 1$ then $|QA|<1$.
\end{claim}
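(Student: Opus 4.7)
Plan: I pick $X \in \conv\{P,Q\} \cap \conv\{A,B,C\}$, which exists by hypothesis, and write $X = \alpha A + \beta B + \gamma C$ in barycentric coordinates with $\alpha+\beta+\gamma=1$ and $\alpha,\beta,\gamma \geq 0$. The edge bounds $|AB|,|BC|,|CA|<1$ together with convexity give
\[
|XA| < \beta+\gamma, \qquad |XB| < \alpha+\gamma, \qquad |XC| < \alpha+\beta
\]
(strict when $X$ is not one of the vertices, and the vertex cases are trivial to dispatch separately since e.g.\ $X=A$ forces $|QA|=|QX|\leq |PQ|<1$). Combining $|PB|\geq 1$ with the triangle inequality $|PB|\leq|PX|+|XB|$ yields $|PX| > 1-(\alpha+\gamma) = \beta$, and symmetrically $|PX|>\gamma$; moreover $|PX|+|QX|=|PQ|<1$. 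Assembling these gives $|QA| \leq |QX|+|XA| < (1-|PX|)+(1-\alpha)$, which is $<1$ precisely when $|PX| > \beta+\gamma = 1-\alpha$.

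The main task is therefore to upgrade the two separate bounds $|PX|>\beta$ and $|PX|>\gamma$ to their \emph{sum} $|PX|>\beta+\gamma$. Remark~\ref{rem:funky-metric} signals that no purely metric argument can do this, since for certain non-Euclidean metrics on $\RR^3$ the analogous statement of the proposition already fails; the key step must therefore use the inner-product structure of $\RR^3$. My plan is to invoke the parallelogram identity
\[
|PB|^2+|PC|^2 = 2|PM|^2 + \tfrac{1}{2}|BC|^2, \qquad M := \tfrac{1}{2}(B+C),
\]
which fuses the two hypotheses $|PB|,|PC|\geq 1$ into the single estimate $|PM| \geq \sqrt{1 - |BC|^2/4}$, together with the analogous identity for $|XB|,|XC|$ giving $|XM|^2 = \tfrac{1}{2}(|XB|^2+|XC|^2)-\tfrac{1}{4}|BC|^2$. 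The triangle inequality $|PX|\geq |PM|-|XM|$ then produces a lower bound on $|PX|$ that reflects the \emph{joint} force of the two constraints, rather than each one in isolation.

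The hardest part will be the final algebraic check that this combined bound indeed delivers $|PX| > \beta+\gamma$ across all admissible triples $(\alpha,\beta,\gamma)$, especially in the regime $\alpha\to 0$ where $\beta+\gamma\to 1$ is maximal. The extra constraint $|PX|<|PQ|<1$ is essential here: together with $|PB|\geq 1$ it rules out the configurations in which $X$ sits too close to edge $BC$, since such configurations would force $|PX|$ to exceed $1$. Threading these constraints — marrying the parallelogram identity on the $(P,B,C)$-side with the triangle inequality for $|QA|$ and the barycentric bound on $|XA|$ — is the genuinely three-dimensional Euclidean content of the claim and where I expect most of the effort to go.
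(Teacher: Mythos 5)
There is a genuine gap: your entire plan reduces the Claim to the inequality $|PX| > \beta+\gamma$, and that inequality is simply false under the hypotheses. Take $B=(0,\tfrac25)$, $C=(0,-\tfrac25)$, $A=(\tfrac12,0)$, $P=(-0.92,0)$, $Q=(0.075,0)$ in the plane $z=0$. Then $|AB|,|AC|,|BC|<1$, $|PQ|=0.995<1$, $|PB|=|PC|=\sqrt{1.0064}\geq 1$, and the segment $PQ$ meets the triangle $ABC$ at $X=(0,0)$, whose (unique) barycentric coordinates are $\alpha=0$, $\beta=\gamma=\tfrac12$. Here $\beta+\gamma=1$ while $|PX|=0.92<1$, so no refinement of the parallelogram-law estimate can ever establish your target inequality for this $X$ --- the target is false, even though the Claim's conclusion $|QA|=0.425<1$ holds. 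This also refutes the hope expressed in your last paragraph: the constraint $|PQ|<1$ does \emph{not} rule out configurations where $X$ lies on the edge $BC$; it only forces $Q$ to be the apex of the configuration, which is perfectly consistent. The failure is structural: the chain $|QA|\leq |QX|+|XA| < (1-|PX|)+(\beta+\gamma)$ discards too much, because when $X$ is near $BC$ the bound $|XA|<\beta+\gamma$ can be very lossy (in the example $|XA|=\tfrac12$, far below $\beta+\gamma=1$). A repair would at minimum require a careful choice of the intersection point $X$ (e.g.\ the one closest to $Q$) and bounds on $|XA|$ that use the actual geometry rather than only the barycentric weights; as written, the argument cannot be completed.

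For comparison, the paper's proof is a short contradiction argument with no quantitative estimates at all: assuming $|QA|\geq 1$, the three strict inequalities $|PQ|<|AQ|$, $|AB|<|PB|$, $|AC|<|PC|$ place $Q$ strictly on the $P$-side of the perpendicular bisector plane $AP^\perp$ and place $A,B,C$ strictly on the $A$-side; since each open side is convex, $\conv\{P,Q\}$ and $\conv\{A,B,C\}$ lie in disjoint half-spaces, contradicting the intersection hypothesis. Your instinct that some non-metric, Euclidean-specific input is needed (in view of Remark~\ref{rem:funky-metric}) is correct, but the input the paper uses is only the convexity of $\{X : |XS|<|XT|\}$ (Remark~\ref{rem:convex-section}), not the parallelogram identity, and it is deployed qualitatively rather than through distance estimates.
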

\begin{proof}
Suppose, on the contrary, that $|QA|\geq 1$. Then we have
\begin{equation}
\label{eq:bisector}
|PQ|<|AQ|,\ |AB|<|PB|,\ |AC|<|PC|.
\end{equation}
Let $AP^\perp$ denote the locus of points $X$ such that $|AX|=|PX|$, that is the $2$-dimensional affine plane which perpendicularly bisects the segment~$AP$. Conditions \eqref{eq:bisector} imply that $Q$ lies (strictly) on the same side of $AP^\perp$ as $P$, while $B$ and $C$ lie on the same side of $AP^\perp$ as~$A$. This contradicts the assumption that $\conv\{P,Q\}\cap\conv\{A,B,C\}\neq\emptyset$.
\end{proof}
We can now complete the proof of the proposition by simple case analysis. If none of the points $A,B,C$ is an apex then by the pigeonhole principle, without loss of generality, we may assume $|PB|,|PC|\geq 1$, and then $|QA|<1$ by the claim. It follows that $|PA|\geq 1$, since otherwise we could choose $A$ as an apex. Now $|PA|,|PB|,|PC|\geq1$ and applying the claim three times we get $|QA|,|QB|,|QC|< 1$, hence $Q$ is an apex.
\end{proof}

\begin{remark}
\label{rem:convex-section}
The proof of Proposition~\ref{prop:beer} uses only the following property of the Euclidean distance: for any two distinct points $S,T\in\RR^n$ the set $\{X\in\RR^n~:~|XS|<|XT|\}$ is convex. 
\end{remark}

\begin{remark}
\label{rem:funky-metric}
Proposition~\ref{prop:beer} fails for some metrics in $\RR^3$. For instance, take the product metric in $\RR^3=\RR^1\times\RR^2$ given by $d((x_1,y_1,z_1),(x_2,y_2,z_2))=|x_1-x_2|+\sqrt{(y_1-y_2)^2+(z_1-z_2)^2}$. Let a unit side equilateral triangle $ABC$ in the plane $x=0$ (the restriction of $d$ to this plane is Euclidean) intersect an orthogonal segment $PQ$ of unit length so that their barycenters coincide. Then $|PA|=\cdots=|QC|=\frac12+\frac{1}{\sqrt{3}}>1$. Rescaling slightly we can assure $|AB|,|BC|,|CA|,|PQ|<1$ with all the other pairwise distances still greater than~$1$.
\end{remark}

The next proposition states, intuitively speaking, that if two tetrahedra in $\RR^3$ have nonempty intersection, and neither is contained in the other, then some facet of one of them intersects some edge of the other. This is an analogue of the fact that if two triangles in $\RR^2$ intersect then they have a pair of intersecting sides (or one is contained in the other). The more involved technical formulation is due to the fact that the two intersecting tetrahedra are allowed to be degenerate.

\begin{proposition}
\label{prop:tetrahedra}
Suppose $\nicea,\niceb\subseteq\RR^3$ are sets with $|\nicea|,|\niceb|\leq 4$ and  $\conv\nicea\cap\conv\niceb\neq\emptyset$. Then at least one of the following holds:
\begin{itemize}
\item[(a)] $\conv\nicea\subseteq\conv\niceb$ or $\conv\niceb\subseteq\conv\nicea$,
\item[(b)]  for some $A,B,C\in \nicea$ and $P,Q\in\niceb$ we have $\conv\{A,B,C\}\cap\conv\{P,Q\}\neq\emptyset$.
\item[(c)]  for some $P,Q\in \nicea$ and $A,B,C\in\niceb$ we have $\conv\{P,Q\}\cap\conv\{A,B,C\}\neq\emptyset$.
\end{itemize}
The choices for $A,B,C$ as well as for $P,Q$ need not be pairwise distinct.
\end{proposition}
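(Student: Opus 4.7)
The plan is to analyze the extreme points of the compact convex set $C=\conv\nicea\cap\conv\niceb$, which is nonempty and bounded and therefore has at least one extreme point $v$.

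For such a $v$, let $F_a$ be the smallest face of $\conv\nicea$ containing $v$, and similarly $F_b$ the smallest face of $\conv\niceb$ containing $v$, so that $v$ lies in the relative interior of each. The crucial observation is that $\dim F_a+\dim F_b\leq 3$: otherwise, in $\RR^3$ the intersection of the affine hulls of $F_a$ and $F_b$ would have positive dimension and contain $v$, producing a short line segment through $v$ inside $F_a\cap F_b\subseteq C$, contradicting extremality. I then split into cases by $(\dim F_a,\dim F_b)$, taking the vertices of $F_a$ as $A,B,C$ (repeating some if $\dim F_a<2$) and the vertices of $F_b$ as $P,Q$ (repeating if $\dim F_b<1$). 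If $\dim F_a\leq 2$ and $\dim F_b\leq 1$, the point $v$ witnesses~(b); if $\dim F_a\leq 1$ and $\dim F_b\leq 2$, it witnesses~(c). The dimension bound leaves only $(3,0)$ and $(0,3)$ uncovered.

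In case $(0,3)$, $v\in\nicea$ is a vertex of $\conv\nicea$ lying in the interior of the full $3$-simplex $\conv\niceb$. If every element of $\nicea$ lies in $\conv\niceb$, then $\conv\nicea\subseteq\conv\niceb$ and (a) holds. Otherwise some $w\in\nicea$ is outside $\conv\niceb$, and the edge $\conv\{v,w\}\subseteq\conv\nicea$ must exit $\conv\niceb$ through one of its four triangular facets, witnessing (c). The case $(3,0)$ is completely symmetric. The main technicality to watch out for is the degenerate situation where $\conv\nicea$ or $\conv\niceb$ has lower-than-expected dimension (a triangle, segment or point), but the extremality argument is stated purely in terms of affine dimensions in $\RR^3$ and so applies without modification.
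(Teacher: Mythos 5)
Your proof is correct in substance, but it takes a genuinely different route from the paper's. The paper argues in two asserted steps: if neither $\conv\nicea$ nor $\conv\niceb$ contains the other, then their boundaries intersect, which produces two intersecting (possibly degenerate) triangles with vertices in $\nicea$ and $\niceb$ respectively; it then invokes the fact that when two triangles in $\RR^3$ intersect, some edge of one meets the other. You instead work with the polytope $C=\conv\nicea\cap\conv\niceb$ directly: an extreme point $v$ of $C$, the minimal faces $F_a,F_b$ of the two hulls containing $v$ in their relative interiors, and the bound $\dim F_a+\dim F_b\le 3$ forced by extremality (otherwise the affine hulls meet in a line through $v$, yielding a segment in $C$ through $v$), followed by a case analysis on $(\dim F_a,\dim F_b)$, with the cases $(3,0)$ and $(0,3)$ handled by either containment or a segment-exits-the-tetrahedron argument. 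Your route is longer but more self-contained: the paper's two key steps are stated without proof and themselves conceal degenerate-case issues, whereas your dimension count treats all degeneracies uniformly and produces the witnesses for (a), (b), (c) explicitly.

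One small repair is needed. When $\dim F_a=2$, the face $F_a$ may have \emph{four} vertices --- this happens exactly when $\nicea$ consists of four coplanar points in convex position, so that $F_a=\conv\nicea$ is a quadrilateral --- and then ``the vertices of $F_a$'' is not a valid choice of $A,B,C$. The fix is immediate: by Carath\'eodory's theorem applied in the $2$-dimensional affine hull of $F_a$ (equivalently, by triangulating the quadrilateral), the point $v$ lies in $\conv\{A,B,C\}$ for some three vertices $A,B,C$ of $F_a$; the same remark applies to $F_b$ in the symmetric case. A $1$-dimensional face is always a segment with two vertices, so no issue arises there. With this adjustment the case analysis is complete and your proof goes through.
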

\begin{proof}
If neither $\conv\nicea$,  $\conv\niceb$ contains the other then their boundaries intersect, i.e. we can find $\nicea'\subseteq\nicea$, $\niceb'\subseteq\niceb$ with $|\nicea'|,|\niceb'|\leq 3$ and $\conv\nicea'\cap\conv\niceb'\neq\emptyset$. We now have two (possibly degenerate) intersecting triangles in $\RR^3$. In that case some edge of one of those triangles intersects the other triangle.
\end{proof}

\section{Local properties of shadows in dimensions up to $3$ and Theorem~\ref{theoremA}}
\label{sect:local}

As indicated in Section~\ref{sect:topology}, the next proposition is the key ingredient in the proof of Theorem~\ref{theoremA}.

\begin{proposition}
\label{prop:pi0epi}
Suppose $X\subseteq \RR^3$ is a finite set and $v\in X$ is any of its points. Then the inclusion map
$$\shadow(X_v)\incl \shadow(X^v)\cap \shadow(X-v)$$
is $0$-connected (that is, a $\pi_0$-epimorphism).
\end{proposition}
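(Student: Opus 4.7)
The plan is to fix $x \in \shadow(X^v) \cap \shadow(X-v)$ and construct a path inside this intersection from $x$ to a point of $\shadow(X_v)$. By Carath\'eodory's theorem, via~\eqref{eq:shadow-def}, one may write $x \in \conv\nicea \cap \conv\niceb$ with $\nicea \subseteq X^v$ and $\niceb \subseteq X-v$, each of cardinality at most four and diameter less than one. If $v \notin \nicea$ then $\nicea \subseteq X_v$ and $x \in \shadow(X_v)$ already, so we may assume $v \in \nicea$.

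Apply Proposition~\ref{prop:tetrahedra} to $(\nicea, \niceb)$. In case~(a), where one convex hull contains the other, a single convex-combination estimate forces $\niceb \subseteq X_v$: if $\conv\nicea \subseteq \conv\niceb$ then $v \in \conv\niceb$ and $|v-y| \leq \sum_i \lambda_i |y_i - y| < 1$ for every $y \in \niceb$; if instead $\conv\niceb \subseteq \conv\nicea$, the same estimate applied to $\nicea = \{v\} \cup (\nicea \cap X_v)$ shows $\conv\nicea \subseteq B(v,1)$, so again $\niceb \subseteq B(v,1)$. Either way $x \in \conv\niceb \subseteq \shadow(X_v)$. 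In cases~(b) and~(c) one obtains a triangle $T$ and an edge $e$, one from each of $\nicea$ and $\niceb$, whose convex hulls meet at some point $z$. The segment $xz$ then lies in $\conv\nicea \cap \conv\niceb \subseteq \shadow(X^v) \cap \shadow(X-v)$, so it suffices to path-connect $z$ to $\shadow(X_v)$ inside the intersection.

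If the simplex drawn from $\nicea$ avoids $v$, its vertices are already in $X_v$ and $z \in \shadow(X_v)$ at once. Otherwise $v$ is a vertex of that simplex, and Proposition~\ref{prop:beer} applied to the five vertices of $T \cup e$ produces an apex $w$. If $w = v$, the apex condition puts the simplex coming from $\niceb$ entirely in $X_v$, again placing $z$ in $\shadow(X_v)$. If $w \neq v$, then $|wv| < 1$ (since $v$ is among the points $w$ is close to) forces $w \in X_v$; the segment $zw$ is then covered, on one side, by $\conv T$ or $\conv e$ (whichever natively contains $w$), and on the other side by that structure augmented with $w$, namely $\conv(e \cup \{w\})$ or $\conv(T \cup \{w\})$. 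The apex inequalities supply the diameter bound $<1$ for the augmented simplex, and $w \in X_v$ ensures that its vertices all lie in the required set ($X^v$ or $X-v$). Hence $zw \subseteq \shadow(X^v) \cap \shadow(X-v)$, completing a path from $x$ to the vertex $w \in \shadow(X_v)$.

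The main obstacle is this final case-analysis: for each of the four subcases, determined by whether $T$ or $e$ comes from $\nicea$ and by where the apex $w$ sits, one must verify that the native simplex lies in the correct shadow and that the augmented simplex does too, carefully tracking which vertices are in $X^v$ versus $X-v$. The essential leverage throughout is the implication \emph{apex and $w \neq v \Rightarrow w \in X_v$}, which is exactly the bridge from $z$ back to $\shadow(X_v)$.
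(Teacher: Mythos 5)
Your proposal is correct and follows essentially the same route as the paper's proof: decompose $x$ via Carath\'eodory, use Proposition~\ref{prop:tetrahedra} to produce an intersecting triangle--edge pair (handling the nested case directly), apply Proposition~\ref{prop:beer} to obtain an apex, and connect to $\shadow(X_v)$ by two straight segments whose containment in $\shadow(X^v)\cap\shadow(X-v)$ is certified by the apex inequalities. The only difference is organizational: where the paper runs an explicit case enumeration after a WLOG reduction, your observation that any apex $w\neq v$ automatically lies in $X_v$ lets you treat all positions of the apex uniformly through the native/augmented simplex pair, which is a slightly cleaner packaging of the same argument.
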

\begin{proof}
It suffices to show that for any point $x\in \shadow(X^v)\cap \shadow(X-v)$ there is a continuous path in $\shadow(X^v)\cap \shadow(X-v)$ which starts in $x$ and ends in a point of $\shadow(X_v)$. This will show that every component of $\shadow(X^v)\cap \shadow(X-v)$ contains a component of $\shadow(X_v)$, and prove the proposition.

By the definition of shadows there are subsets $\nicea\subseteq X^v$ and $\niceb\subseteq X-v$ such that 
$$|\nicea|,|\niceb|\leq 4,\quad \diam(\nicea), \diam(\niceb)<1,\quad x\in \conv\nicea\cap\conv\niceb.$$ 

If $\nicea\subseteq X_v$ or $\niceb\subseteq X_v$ then we are done since $x\in \shadow(X_v)$. We can therefore restrict to the case when $v\in \nicea$ and when $\niceb$ contains a point from outside $B(v,1)$. In particular, since $\conv\nicea\subseteq B(v,1)$, that implies $\conv\niceb\not\subseteq\conv\nicea$. If we had $\conv\nicea\subseteq\conv\niceb$ then in particular $v\in\conv\niceb$. Since $\niceb$ has a point outside  $B(v,1)$, we get $1> \diam(\niceb)=\diam(\conv\niceb)\geq 1$, which is a contradiction. It follows that $\conv\nicea\not\subseteq\conv\niceb$.

We conclude that neither of $\conv\nicea$ and $\conv\niceb$ is contained in the other. By Proposition~\ref{prop:tetrahedra} this means that there is a point $y$ in the intersection of a triangle with vertices in $\nicea$ and a segment with vertices in $\niceb$ or vice versa (the triangle and segment can each be degenerate). Since $\conv\nicea\cap\conv\niceb$ is convex, the straight line segment connecting $x$ to $y$ lies entirely in $\conv\nicea\cap\conv\niceb\subseteq\shadow(X^v)\cap\shadow(X-v)$. It remains to find a path connecting $y$ to $\shadow(X_v)$ inside $\shadow(X^v)\cap\shadow(X-v)$. The proof splits into two cases, corresponding to parts (b) and (c) of Proposition~\ref{prop:tetrahedra}. See Figure~\ref{fig:intersections}. In each case $y$ will be connected to $\shadow(X_v)$ by a single segment.

\emph{Case 1.} We have $y\in\conv\{A,B,C\}\cap\conv\{P,Q\}$ for $A,B,C\in\nicea\subseteq X^v$ and $P,Q\in\niceb\subseteq X-v$. Repeating the arguments from the first part of the proof for the new sets $\nicea'=\{A,B,C\}$ and $\niceb'=\{P,Q\}$, we reduce without loss of generality  to the case when $A=v$, $B,C\in X_v$ (possibly $B=C$) and $Q\not\in X_v$. By Proposition~\ref{prop:beer} the configuration $\{A,B,C,P,Q\}$ has an apex. We have two possibilities:
\begin{itemize}
\item If $P$ is an apex then $P\in X_v$. The straight line segment $yP$ belongs to $$\conv\{A,B,C,P\}\cap\conv\{P,Q\}\subseteq \shadow(X^v)\cap\shadow(X-v)$$ and connects $y$ to $P\in X_v$. 
\item Neither $A$ nor $Q$ can be an apex, so the only remaining case, w.l.o.g., is that $B$ is an apex. Then the segment $yB$ is in $\conv\{A,B,C\}\cap\conv\{B,P,Q\}\subseteq \shadow(X^v)\cap\shadow(X-v)$, connecting $y$ to $B\in X_v$.
\end{itemize}

\emph{Case 2.} We have $y\in\conv\{P,Q\}\cap\conv\{A,B,C\}$ for $P,Q\in\nicea\subseteq X^v$ and $A,B,C\in\niceb\subseteq X-v$. As before we can assume that $P=v$, $Q\in X_v$ and that $C\not\in X_v$. Up to relabelling the possibilities are as follows:
\begin{itemize}
\item $A,B\in X_v$. Either $Q$, $A$ or $B$ is an apex, so we must have $|QA|< 1$ or ${|QB|< 1}$. W.l.o.g. suppose that $|QA|< 1$. The segment $yA$ is now in $$\conv\{P,Q,A\}\cap\conv\{A,B,C\}\subseteq \shadow(X^v)\cap\shadow(X-v),$$ connecting $y$ to $A\in X_v$.
\item $A\in X_v$, $B\not\in X_v$. Only $A$ or $Q$ can be an apex, and in either case $|QA|<1$. We conclude as before.
\item $A,B\not\in X_v$. Now $Q$ must be the apex, and the segment $yQ$ belongs to $\conv\{P,Q\}\cap\conv\{A,B,C,Q\}\subseteq \shadow(X^v)\cap\shadow(X-v)$, connecting $y$ to $Q\in X_v$.
\end{itemize}
This ends the proof of the proposition.
\end{proof}

\begin{figure}
\includegraphics[scale=1.2]{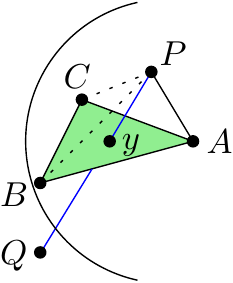} \includegraphics[scale=1.2]{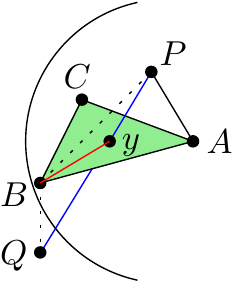} \includegraphics[scale=1.2]{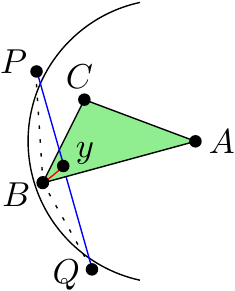}\\
\includegraphics[scale=1.2]{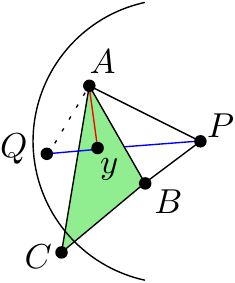} \includegraphics[scale=1.2]{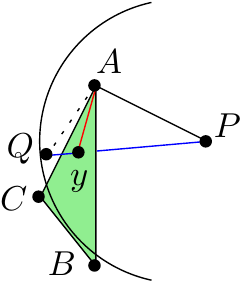} \includegraphics[scale=1.2]{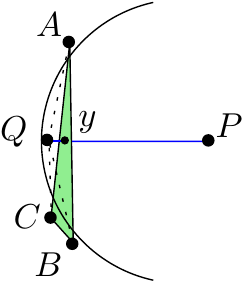}
\caption{Case 1 (top) and case 2 (bottom) in the proof of Proposition~\ref{prop:pi0epi}.}
\label{fig:intersections}
\end{figure}
\begin{figure}
\includegraphics[scale=1.2]{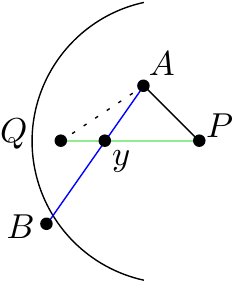} \includegraphics[scale=1.2]{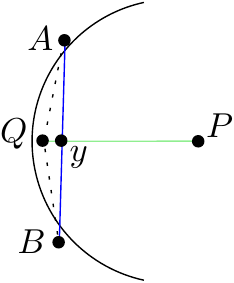}
\caption{The simplified case analysis in Proposition~\ref{prop:pi0epi} for subsets of the plane only.}
\label{fig:intersections2}
\end{figure}

\begin{remark}
\label{rem:dim2easier}
For subsets $X\subseteq \RR^2$ the case analysis is much shorter (Figure~\ref{fig:intersections2}) and requires only Proposition~\ref{prop:tlemma}. Of course the proof above covers the case of $X\subseteq \RR^2\subseteq \RR^3$ anyway.
\end{remark}

\begin{remark}
\label{rem:empty}
The proposition holds also when $X_v=\emptyset$, when it implies that $\shadow(X^v)\cap{\shadow(X-v)}=\emptyset$. This is easily verified in any dimension with an argument from the first part of the proof.
\end{remark}

\begin{remark}
\label{rem:dim4fail}
Proposition~\ref{prop:pi0epi} fails in dimension~$4$. The counterexample in \cite[Prop. 5.4]{chambers2010vietoris} works to show that and we recall it briefly. Let $\xi=\exp(i\pi/3)$ and choose a constant $c$ slightly less than~$1/\sqrt{3}$. The triples $\{c\xi^0,c\xi^2,c\xi^4\}$ and $\{c\xi^1,c\xi^3,c\xi^5\}$ each span a triangle in $\CC$ of diameter less than one. For sufficiently small $\varepsilon$ consider the set $X=\{v_0,\ldots,v_5\}\subseteq\CC^2$ with:
$$v_0=(c\xi^0,0),\ v_2=(c\xi^2,0),\ v_4=(c\xi^4,0),\ v_1=(c\xi^1,\varepsilon\xi^1),\ v_3=(c\xi^3,\varepsilon\xi^3),\ v_5=(c\xi^5,\varepsilon\xi^5).$$
Note that $|v_iv_{i+3}|\approx 2c>1$ and the other distances are less than $1$ for $\varepsilon$ small enough. We see that $\shadow(X_{v_0})$ is the $4$-cycle on $\{v_1,v_2,v_4,v_5\}$. The intersection $\shadow(X^{v_0})\cap\shadow(X-{v_0})$ has two connected components: one is $\shadow(X_{v_0})$ and the other one is the point $(0,0)$ which lies in $\conv\{v_0,v_2,v_4\}\cap\conv\{v_1,v_3,v_5\}$.
\end{remark}

\begin{proof}[Proof of Theorem~\ref{theoremA}]
We can prove by induction that for all finite $X\subseteq \RR^3$ the map $p\colon\rips(X)\to\shadow(X)$ is $1$-connected. Indeed, the statement is true if $X=\emptyset$. Otherwise pick $v\in X$ and observe that in \eqref{eq:diagram} the right vertical map $\rips(X^v)\to\shadow(X^v)$ is $\infty$-connected by Lemma~\ref{lem:stars}, the left map $\rips(X-v)\to\shadow(X-v)$ is $1$-connected by induction, and the map $\rips(X_v)\to\shadow(X_v)\to\shadow(X^v)\cap\shadow(X-v)$ in the back of the diagram is a composition of $0$-connected maps by induction and Proposition~\ref{prop:pi0epi}. Now Proposition~\ref{prop:mayer} implies that the map in the front is $1$-connected. That proves Theorem~\ref{theoremA}.
\end{proof}

\section{Universality}
\label{sect:universal}

The goal of this section is to show that Rips complexes of finite subsets of $\RR^n$ model at least all the homotopy types of finite simplicial complexes embedded in~$\RR^n$. We should remark that this is by no means a complete characterization in dimensions~$n\geq 2$. For instance for every $k\geq 0$ there is a finite subset $X\subseteq \RR^2$ for which $\rips(X)$ is homeomorphic to the sphere~$S^k$.

Suppose we have an abstract simplicial complex $K$ with a fixed simplex-wise linear embedding in some~$\RR^n$. 
We will denote the image of this embedding by $|K|\subseteq \RR^n$. We would like to take a dense subset $X\subseteq |K|$ and a complex $\rips(X;\varepsilon)$ for $\varepsilon>0$ small enough so that the points from disjoint faces of $K$ do not see each other at scale $\varepsilon$. 

Let us write $\ost_K(\sigma)\subseteq |K|$ for the open star of $\sigma$ with respect to the given embedding. It is defined as the union of the interiors $|\mathrm{Int}(\tau)|$ over all faces $\tau$ containing~$\sigma$. It is a standard fact that the collection $\{\ost_K(v)\}_{v\in V(K)}$ forms a cover of $|K|$ by open sets, with all intersections either empty or contractible and with nerve isomorphic to~$K$. Indeed, 
\begin{equation}
\label{eq:nerve}
\ost_K(v_1)\cap\cdots\cap\ost_K(v_t)=
\begin{cases}
\emptyset & \mathrm{if}\  \{v_1,\ldots,v_t\}\not\in K\\
\ost_K(\sigma) & \mathrm{if}\  \{v_1,\ldots,v_t\}=:\sigma\in K,
\end{cases}
\end{equation}
and each set $\ost_K(\sigma)$ is contractible.

To simplify the exposition we will first show that an embedded simplicial complex is homotopy equivalent to its thickenings obtained as Rips complexes on \emph{all} the points of $|K|$ at sufficiently small distance scales. Here is where a Rips complex with an infinite vertex set appears. The proof uses a covering induced by the one above and the Nerve Lemma. The next lemma will ensure contractibility of the pieces of the cover.

\begin{lemma}
\label{lem:crushing}
If $F\subseteq \RR^n$ is a bounded star-shaped set then for any $r>0$ the Rips complex $\rips(F;r)$ is contractible.
\end{lemma}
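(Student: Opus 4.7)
The plan is to build the contraction by working with the ``straight-line to the star-center'' maps at the simplicial level, but applied one finite subcomplex at a time. Fix a star-center $c \in F$ and set $R = \sup_{x \in F}|xc|$, which is finite by boundedness. For each $t \in [0,1]$, define a vertex map $\phi_t \colon F \to F$ by $\phi_t(x) = (1-t)x + tc$; this lands in $F$ because $F$ is star-shaped from $c$, and it shrinks pairwise distances by a factor $1-t$. Consequently $\phi_t$ carries simplices of $\rips(F;r)$ to simplices, so it is a simplicial self-map, with $\phi_0 = \id$ and $\phi_1$ the constant map to~$c$.

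The naive idea of interpolating between $\phi_0$ and $\phi_1$ in one stroke does not produce a continuous map on the geometric realization, because when $F$ is infinite the diameters of simplices of $\rips(F;r)$ may cluster at $r$ and there is no uniform modulus of contiguity. I would bypass this by showing that every \emph{finite} subcomplex $L \subseteq \rips(F;r)$ is null-homotopic in $\rips(F;r)$. For such $L$, the number $\delta := r - \max_{\sigma \in L}\diam(\sigma)$ is strictly positive, and a direct computation gives, for any $x,y$ in a common simplex of $L$,
\[
|\phi_s(x) - \phi_t(y)| \leq (1-s)|xy| + |s-t|\,|yc| \leq (r - \delta) + |s-t|\,R.
\]
Hence whenever $|s-t| < \delta/R$, the set $\phi_s(\sigma) \cup \phi_t(\sigma)$ has diameter less than $r$, i.e.\ it is a simplex of $\rips(F;r)$. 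Thus $\phi_s|_L$ and $\phi_t|_L$ are contiguous as simplicial maps $L \to \rips(F;r)$, and contiguous simplicial maps are homotopic.

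Choosing a subdivision $0 = t_0 < t_1 < \cdots < t_N = 1$ of $[0,1]$ with consecutive gaps less than $\delta/R$ then chains a finite sequence of contiguities from $\id_L = \phi_0|_L$ to the constant map $\phi_1|_L \equiv c$, proving that the inclusion $L \hookrightarrow \rips(F;r)$ is null-homotopic. Since any continuous map of a sphere $S^n$ into a CW complex has image in a finite subcomplex, this yields $\pi_n(\rips(F;r)) = 0$ for all $n \geq 1$. Path-connectedness is easy separately: for any $x \in F$ the segment from $c$ to $x$ lies in $F$ and can be sampled at spacing less than $r$, giving an edge path from $c$ to $x$. Thus $\rips(F;r)$ is a weakly contractible CW complex, and therefore contractible by Whitehead's theorem.

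The main obstacle, as signaled above, is exactly the non-uniformity that appears for infinite $F$: we cannot assemble the $\phi_t$'s into a single continuous contraction of the whole realization. The localization to finite subcomplexes, where $\delta$ is bounded away from zero, is the essential trick that makes the contiguity argument succeed.
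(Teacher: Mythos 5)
Your proof is correct, but it takes a genuinely different route from the paper's. The paper disposes of the lemma in two lines: it observes that the linear homotopy $h(x,t)=tx+(1-t)o$ scales all distances by the factor $t$, hence is a \emph{crushing} of $F$ to its star-center in the sense of Hausmann, and then it invokes Hausmann's result [(2.3) of his 1995 paper] that the Rips complex of a crushable metric space is contractible at every scale. You use the same geometric input (the distance-scaling vertex maps $\phi_t$) but supply the topological part yourself, and you correctly isolate the exact subtlety that forces the paper to cite Hausmann rather than simply push the homotopy through the realization: for infinite $F$ the simplex diameters may accumulate at $r$, so the $\phi_t$ do not assemble into a continuous homotopy of the geometric realization. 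Your resolution --- restrict to a finite subcomplex $L$, where $\delta = r - \max_{\sigma\in L}\diam(\sigma) > 0$ gives a uniform contiguity scale, chain finitely many contiguities from $\id_L$ to the constant map at $c$, deduce weak contractibility because compact subsets of a CW complex lie in finite subcomplexes, and finish with Whitehead's theorem --- is sound: the estimate $|\phi_s(x)-\phi_t(y)|\leq (1-s)|xy|+|s-t|\,|yc|$ is correct, the cross-distances are all that is needed for $\phi_s(\sigma)\cup\phi_t(\sigma)$ to span a simplex, and contiguous simplicial maps are indeed homotopic. (The only degenerate case: if $F$ is a single point then $R=0$ and $\delta/R$ is undefined, but the lemma is trivial there.) As for what each approach buys: the paper's citation is shorter and situates the lemma in a general framework, since crushings make sense for arbitrary metric spaces; your argument is elementary and self-contained, in effect reproving the special case of Hausmann's theorem that the paper actually needs, and the finite-subcomplex-plus-contiguity technique is a robust device worth having for similar statements about Rips complexes with infinite vertex sets.
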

\begin{proof}
Let $o$ be the center of $F$ and consider the linear homotopy $h:F\times [0,1]\to F$ given by $h(x,t)=tx+(1-t)o$. It scales distances, i.e. $|h(x,t)h(y,t)|=t|xy|$, and in particular $|h(x,t)h(y,t)|\geq|h(x,t')h(y,t')|$ for $t\geq t'$, which means that $h$ is a \emph{crushing}  of $F$ (to the point~$o$) in the sense of Hausmann \cite[p.177]{Hausmann1995}. By \cite[(2.3)]{Hausmann1995} the Rips complex, at any scale, of a crushable metric space is contractible.
\end{proof}

\begin{theorem}
\label{thm:infinite}
For every finite simplicial complex $K$ with a linear embedding $|K|\subseteq\RR^n$ there is an $\varepsilon_0>0$, such that for all $0<\varepsilon<\varepsilon_0$ we have a homotopy equivalence $\rips(|K|;\varepsilon)\htpyequiv K$.
\end{theorem}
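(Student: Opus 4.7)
The plan is to apply the Nerve Lemma to a cover of $\rips(|K|;\varepsilon)$ induced by the open-star cover of $|K|$. For each vertex $v \in V(K)$, let $U_v = \ost_K(v)$; the collection $\{U_v\}_{v \in V(K)}$ is the open cover of $|K|$ whose combinatorial properties are recorded in~\eqref{eq:nerve}. The candidate cover of the Rips complex is the family of subcomplexes $\{\rips(U_v;\varepsilon)\}_{v \in V(K)}$, and the goal is to show that for $\varepsilon$ small enough this cover satisfies the hypotheses of the Nerve Lemma with nerve equal to $K$.

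First, I would fix $\varepsilon_0$ to be a Lebesgue number for the open cover $\{U_v\}$ of the compact metric space $|K|$. Then for every $\varepsilon < \varepsilon_0$, any finite set $Y \subseteq |K|$ with $\diam(Y) < \varepsilon$ lies in some $U_v$, so any simplex of $\rips(|K|;\varepsilon)$ is a simplex of some $\rips(U_v;\varepsilon)$. Therefore the family $\{\rips(U_v;\varepsilon)\}_v$ does cover $\rips(|K|;\varepsilon)$ by subcomplexes. The intersection of subcomplexes is computed on vertex sets, which commutes with forming Rips complexes, so for any $\{v_1,\ldots,v_t\} \subseteq V(K)$,
\[
\rips(U_{v_1};\varepsilon) \cap \cdots \cap \rips(U_{v_t};\varepsilon) = \rips\bigl(U_{v_1} \cap \cdots \cap U_{v_t};\varepsilon\bigr).
\]
By \eqref{eq:nerve} this intersection is empty unless $\sigma = \{v_1,\ldots,v_t\}$ is a face of $K$, in which case $U_{v_1}\cap\cdots\cap U_{v_t} = \ost_K(\sigma)$.

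Next I would verify that each nonempty intersection $\rips(\ost_K(\sigma);\varepsilon)$ is contractible. The set $\ost_K(\sigma)$ is bounded (as a subset of the compact $|K|$) and star-shaped with respect to the barycenter $b_\sigma$ of $\sigma$: any point $q \in \ost_K(\sigma)$ lies in the relative interior of some closed simplex $\tau \supseteq \sigma$, and then the segment from $b_\sigma$ to $q$ stays inside $\tau$ while its barycentric coordinates remain strictly positive on all vertices of $\sigma$, so the segment lies in $\ost_K(\sigma)$. Lemma~\ref{lem:crushing} therefore applies and yields contractibility of $\rips(\ost_K(\sigma);\varepsilon)$.

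Finally, the nerve of the cover $\{\rips(U_v;\varepsilon)\}_v$ is, by the identity above, exactly the abstract simplicial complex on $V(K)$ whose faces are the $\sigma \in K$; that is, the nerve is $K$ itself. The simplicial Nerve Lemma (for covers of a simplicial complex by subcomplexes with contractible nonempty intersections) then gives $\rips(|K|;\varepsilon) \simeq K$, as desired. The main technical obstacle I expect is the passage from the open-cover identities on the topological space $|K|$ to the subcomplex-cover of the (uncountable-vertex) Rips complex: one has to be sure that forming Rips complexes commutes with finite intersection of the $U_v$'s, and that the star-shapedness input to Lemma~\ref{lem:crushing} survives for every $\sigma \in K$, not just for the vertex case.
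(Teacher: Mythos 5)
Your proof is correct and follows essentially the same route as the paper: cover $\rips(|K|;\varepsilon)$ by the subcomplexes $\rips(\ost_K(v);\varepsilon)$, identify the nerve with $K$ via \eqref{eq:nerve}, obtain contractibility of the pieces from Lemma~\ref{lem:crushing} applied to the star-shaped sets $\ost_K(\sigma)$, and conclude with the simplicial Nerve Lemma. The only difference is cosmetic: you take $\varepsilon_0$ to be a Lebesgue number of the open-star cover of the compact space $|K|$, whereas the paper defines $\varepsilon_0$ by the explicit infimum \eqref{eq:epsilon0} and proves its positivity by the same kind of compactness argument.
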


\begin{proof}
We will first verify two claims which hold independently of~$\varepsilon$.

\begin{customclaim}{A}
The nerve of the collection of simplicial complexes $\{\rips(\ost_K(v);\varepsilon)\}_{v\in V(K)}$ is isomorphic to $K$ for any~$\varepsilon>0$.
\end{customclaim}
\begin{proof}
The nerve of a collection of abstract simplicial complexes is isomorphic to the nerve of their vertex sets, hence the statement follows from~\eqref{eq:nerve}.
\end{proof}

\begin{customclaim}{B}
Each complex $\rips(\ost_K(\sigma);\varepsilon)$ is contractible for any $\emptyset\neq \sigma\in K$ and $\varepsilon>0$.
\end{customclaim}
\begin{proof}
Since $|K|\subseteq \RR^n$ is a linear embedding of $K$, the set $\ost_K(\sigma)\subseteq \RR^n$ is a star-shaped subset with center in the barycenter of~$|\sigma|$. The claim now follows from Lemma~\ref{lem:crushing}. 
\end{proof}

Finally, we need to ensure that the collection of subcomplexes $\{\rips(\ost_K(v);\varepsilon)\}_{v\in V(K)}$ covers~$\rips(|K|;\varepsilon)$. Here is where the choice of $\varepsilon$ matters. Define
\begin{equation}
\label{eq:epsilon0}
\varepsilon_0=\inf_{\substack{\sigma_1,\ldots,\sigma_t\in K,\ \sigma_i\neq \emptyset\\ \sigma_1\cap\cdots\cap\sigma_t=\emptyset}}\inf\{
\diam(\{x_1,\ldots,x_t\})~:~x_i\in|\sigma_i|\ \mathrm{for\ all}\ i\}.
\end{equation}
The diameter of a set is not smaller than the diameter of any of its subsets. It follows that for the first infimum we can restrict to the configurations with $\sigma_i\neq\sigma_j$ for all $1\leq i<j\leq t$. Since $K$ is finite, there are only finitely many of those.

Since $|K|$ is an embedding of $K$ the condition $\bigcap_{i=1}^t\sigma_i=\emptyset$ implies $\bigcap_{i=1}^t|\sigma_i|=\emptyset$. It follows that the continuous function $\prod_{i=1}^t|\sigma_i|\to\RR$ defined as $(x_1,\ldots,x_t)\to\diam\{x_1,\ldots,x_t\}$ does not hit $0$. Since $\prod_{i=1}^t|\sigma_i|$ is compact, the function attains a strictly positive minimum. Together with the previous remark this implies~$\varepsilon_0>0$. Now we come to the last claim.

\begin{customclaim}{C}
If $0<\varepsilon<\varepsilon_0$ then the collection $\{\rips(\ost_K(v);\varepsilon)\}_{v\in V(K)}$ is a covering of $\rips(|K|;\varepsilon)$.
\end{customclaim}
\begin{proof}
Suppose that $\{x_1,\ldots,x_t\}$ is a face of $\rips(|K|;\varepsilon)$. For $i=1,\ldots,t$ let $\sigma_i$ be the smallest-dimensional face of $K$ such that $x_i\in |\sigma_i|$. Since $\diam\{x_1,\ldots,x_t\}<\varepsilon<\varepsilon_0$, by definition of $\varepsilon_0$ we conclude $\bigcap_{i=1}^t\sigma_i\neq\emptyset$. Choose any vertex $v\in\bigcap_{i=1}^t\sigma_i$. Then each $x_i$ lies in $\ost_K(v)$, and consequently $\{x_1,\ldots,x_t\}$ is a face of $\rips(\ost_K(v);\varepsilon)$.
\end{proof}
The three claims imply that for $0<\varepsilon<\varepsilon_0$ the collection $\{\rips(\ost_K(v);\varepsilon)\}_{v\in V(K)}$ is a good cover of $\rips(|K|;\varepsilon)$ with nerve isomorphic to $K$. By the Nerve Lemma (for subcomplexes of a simplicial complex \cite[Theorem 10.6]{Bjorner1995}) that completes the proof.
\end{proof}

Our next step is to replace the infinite vertex set $|K|$ of $\rips(|K|;\varepsilon)$ with a sufficiently dense finite sample. Given $X\subseteq F\subseteq \RR^n$ and $\delta>0$ we say that \emph{$X$ is $\delta$-dense in $F$} if for every $p\in F$ there is some $q\in X$ such that $|pq|<\delta$. The next lemma is a version of Lemma~\ref{lem:crushing} with finite samples.

\begin{lemma}
\label{lem:crulat}
For every bounded star-shaped set $F\subseteq \RR^n$ and every $\varepsilon>0$ there exists a $\delta>0$ with the following property. If $X\subseteq F$ is finite and $\delta$-dense in $F$ then $\rips(X;\varepsilon)$ is contractible.
\end{lemma}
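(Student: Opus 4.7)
Plan:

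The plan is to mimic the crushing argument of Lemma~\ref{lem:crushing} in a discrete form. Let $o$ be the center of $F$, set $D=\diam(F)$, and let $h(x,t)=tx+(1-t)o$ be the linear crushing. For each $t\in[0,1]$ choose a map $\phi_t\colon X\to X$ by picking $\phi_t(x)\in X$ to be a nearest point of $X$ to $h(x,t)$; by $\delta$-density $|\phi_t(x)-h(x,t)|<\delta$, and ties can be broken so that $\phi_1=\mathrm{id}_X$. Fix a discretization $1=t_0>t_1>\cdots>t_N=0$ and set $\phi_i:=\phi_{t_i}$, so $\phi_0=\mathrm{id}$ while $\phi_N$ is the constant map to a fixed $o^*\in X$ closest to~$o$. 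The goal is to produce a chain of simplicial contiguities $\mathrm{id}=\phi_0\simeq\phi_1\simeq\cdots\simeq\phi_N\equiv o^*$, which would give the required contractibility of $\rips(X;\varepsilon)$.

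The two key geometric estimates, following from $|h(x,t)-h(y,t)|=t|xy|$ and the triangle inequality, are
$|\phi_i(x)-\phi_i(y)|<2\delta+t_i|xy|$ and
$|\phi_i(x)-\phi_{i+1}(y)|<2\delta+t_{i+1}|xy|+(t_i-t_{i+1})|y-o|\leq 2\delta+t_{i+1}\varepsilon+(t_i-t_{i+1})D$.
The first implies that $\phi_i$ is a simplicial self-map of $\rips(X;\varepsilon)$ provided $\delta<(1-t_i)\varepsilon/2$, and the second implies that consecutive $\phi_i,\phi_{i+1}$ are simplicially contiguous on $\rips(X;\varepsilon)$ provided $2\delta+(t_i-t_{i+1})D<(1-t_{i+1})\varepsilon$. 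Both inequalities can be arranged uniformly for all $i\geq 1$ by choosing a geometric schedule $1-t_i\propto q^{N-i}$ for some $q\in(0,1)$ and taking $\delta$ of order $\varepsilon^2/D$, with $N$ large enough that $t_N=0$ is reached. Contiguity of the resulting chain then produces the desired simplicial homotopy from $\mathrm{id}$ to a constant map.

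The main obstacle is the very first step, from $\phi_0=\mathrm{id}$ to $\phi_1$: at $t_0=1$ the crushing is the identity and so no contraction factor is available to absorb the nearest-neighbor error~$\delta$. The contiguity bound degenerates to $2\delta+(1-t_1)D<(1-t_1)\varepsilon$, which fails as soon as $D\geq\varepsilon$ (the typical situation, since $\varepsilon$ can be small while $F$ is large). To bridge this gap I would appeal to a Latschev-style interleaving \cite{Latschev2001}: the inclusion $\iota\colon\rips(X;\varepsilon)\incl\rips(F;\varepsilon)$ composed with a simplicial nearest-point retraction $\rho\colon\rips(F;\varepsilon)\to\rips(X;\varepsilon+2\delta)$ factors the scale shift $\rips(X;\varepsilon)\to\rips(X;\varepsilon+2\delta)$ through $\rips(F;\varepsilon)$, and since the latter is contractible by Lemma~\ref{lem:crushing}, this shift is null-homotopic. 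Combined with the observation that the discretized homotopy $\phi_1\simeq\cdots\simeq\phi_N$ already shows contractibility at the slightly enlarged scale $\varepsilon+2\delta$, and that for $\delta$ sufficiently small no persistent topology is born in the window $[\varepsilon,\varepsilon+2\delta]$ on the $X$-side (because the interleaved $F$-side is uniformly contractible), one recovers contractibility of $\rips(X;\varepsilon)$ itself.
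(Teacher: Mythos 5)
Your plan correctly identifies its own fatal obstruction: a global contiguity scheme cannot get off the ground, because at $t_0=1$ there is no contraction factor available to absorb the nearest-neighbour error $\delta$, so $\mathrm{id}$ cannot be made contiguous to any crushed map once $\diam(F)\geq\varepsilon$. The problem is that your proposed repair does not close this gap. What the interleaving through the contractible complex $\rips(F;\varepsilon)$ proves (and, equally, what your chain $\mathrm{id}\simeq\phi_1\simeq\cdots\simeq\phi_N$ proves when its maps are viewed as landing in the larger-scale complex) is only that the scale-shift map $\rips(X;\varepsilon)\to\rips(X;\varepsilon+2\delta)$ is null-homotopic. That is strictly weaker than the lemma. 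The inference ``the shift is null-homotopic and the larger-scale complex is contractible, hence $\rips(X;\varepsilon)$ is contractible'' is invalid as a logical pattern: the six vertices of a regular hexagon with unit side are $1$-dense in the filled hexagon, and at a scale slightly above $1$ their Rips complex is a $6$-cycle, while at larger scales it is a simplex; the shift is null-homotopic, yet the $6$-cycle is not contractible. Your argument never uses the smallness of $\delta$ at the one place where it would have to, namely to exclude topology that is born at scale $\varepsilon$ and dies before scale $\varepsilon+2\delta$. ``No persistent topology in the window'' is perfectly compatible with such non-persistent topology, and ruling it out is precisely the content of the lemma. (The subsidiary claim that $\phi_1\simeq\cdots\simeq\phi_N$ ``already shows contractibility at scale $\varepsilon+2\delta$'' fails for the same reason: it shows that the map $\phi_1$, not the identity, is null-homotopic, and joining the identity to $\phi_1$ at the enlarged scale reproduces the first-step obstruction there.)

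The paper's proof (following Latschev) avoids global homotopies altogether and works at the single scale $\varepsilon$: order $X=\{x_1,\ldots,x_s\}$ by distance from the center, and for the outermost remaining point $x_i$ exhibit a sample point $g(x_i)$, strictly closer to the center, such that every point of $X_i$ that is $\varepsilon$-close to $x_i$ is also $\varepsilon$-close to $g(x_i)$; then the link of $x_i$ in $\rips(X_i;\varepsilon)$ is a cone with apex $g(x_i)$, so deleting $x_i$ is a homotopy equivalence, and induction finishes. The geometric heart is the inclusion of the lune $\overline{B}(0,\lVert x_i\rVert)\cap\overline{B}(x_i,\varepsilon)$ into the ball $B(g(x_i),\varepsilon)$, which is where the quantitative choice of $\delta$ enters. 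Any repair of your approach would have to be similarly local --- one vertex at a time --- since that is the only regime in which the error $\delta$ can be beaten by a genuine contraction factor.
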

\begin{proof}
This was shown by Latschev \cite[Lemma 2.1, Remark 2.2]{Latschev2001} in the case when $F$ is convex, but a careful inspection of the proof reveals that it applies verbatim when $F$ is star-shaped. Since this lemma is key for the proof of Theorem~\ref{theoremB}, we will present the proof for completeness.

Without loss of generality assume that $0\in F$ is the center of $F$ and write $\lVert p\rVert=|0p|$. Choose $R$ so that $F\subseteq B(0,R)$. For every $p\in F$ with $\lVert p\rVert\geq\frac12\varepsilon$ let $c(p)$ be the center of the sphere $\partial B(0,\lVert p\rVert)\cap\partial B(p,\varepsilon)$. We easily compute
$c(p)=p\cdot(\lVert p\rVert-\frac{\varepsilon^2}{2\lVert p\rVert})$ and $|c(p)p|=\frac{\varepsilon^2}{2\lVert p\rVert}$. The radius of the sphere $\partial B(0,\lVert p\rVert)\cap\partial B(p,\varepsilon)$ is $\varepsilon\sqrt{1-\frac{\varepsilon^2}{4\lVert p\rVert^2}}$.

Define $\delta=\varepsilon-\varepsilon\sqrt{1-\frac{\varepsilon^2}{4R^2}}$ and suppose $X$ is $\delta$-dense in $F$. Since $F$ is star-shaped, $p \in F$ implies $c(p)\in F$, and we can find a point $g(p)\in X$ with $|g(p)c(p)|<\delta$. The above geometric considerations and the triangle inequality now give:
\begin{equation}
\label{eq:balls}
\ \overline{B}(0,\lVert p\rVert)\cap \overline{B}(p,\varepsilon)\subseteq  \overline{B}\Big( c(p),\varepsilon\sqrt{1-\frac{\varepsilon^2}{4\lVert p\rVert^2}}\Big)\subseteq  B\Big(c(p),\varepsilon\sqrt{1-\frac{\varepsilon^2}{4R^2}}\Big)\subseteq B(g(p),\varepsilon).
\end{equation}
Moreover, we check that for $\lVert p\rVert\geq \frac12\varepsilon$ we have
$$ \delta<\varepsilon-\varepsilon\Big(1-\frac{\varepsilon^2}{4R^2}\Big)=\frac{\varepsilon^3}{4R^2}<\frac{\varepsilon^3}{4\lVert p\rVert^2}=|c(p)p|\cdot \frac{\varepsilon}{2\lVert p\rVert}\leq |c(p)p|$$
which implies a strict inequality $\lVert g(p)\rVert<\lVert p\rVert$.

Order the elements of $X$ as $X=\{x_1,\ldots,x_s\}$ with $\lVert x_1\rVert\leq \cdots\leq \lVert x_s\rVert$ and let $X_i=\{x_1,\ldots,x_i\}$ for $i=1,\ldots,s$. We will show by induction that $\rips(X_i;\varepsilon)$ is contractible, and for $i=s$ that proves the lemma. Of course $\rips(X_1;\varepsilon)$ is contractible.

If $\lVert x_i\rVert<\frac12\varepsilon$ then $X_i\subseteq B(0,\frac12\varepsilon)$ and $\rips(X_i;\varepsilon)$ is a simplex. Suppose $\lVert x_i\rVert\geq\frac12\varepsilon$. Every point of $X_i$ that is $\varepsilon$-close to $x_i$ lies in $\overline{B}(0,\lVert x_i\rVert)\cap B(x_i,\varepsilon)$, and by \eqref{eq:balls} it is $\varepsilon$-close to~$g(x_i)$. Since $g(x_i)\in X_{i-1}$, we conclude that the link of $x_i$ in $\rips(X_i;\varepsilon)$ is a cone with apex in $g(x_i)$. As the link of $x_i$ is contractible, we have $\rips(X_i;\varepsilon)\htpyequiv \rips(X_{i-1};\varepsilon)$, completing the inductive step.
\end{proof}

The next theorem implies Theorem~\ref{theoremB}. In the proof we will recycle parts of the proof of Theorem~\ref{thm:infinite}.

\begin{theorem}[Theorem~\ref{theoremB}]
\label{thm:finite}
For every finite simplicial complex $K$ with a linear embedding $|K|\subseteq\RR^n$ there is an $\varepsilon>0$ and a finite set $X\subseteq |K|$, such that we have a homotopy equivalence $\rips(X;\varepsilon)\htpyequiv K$.
\end{theorem}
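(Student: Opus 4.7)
The plan is to replicate the proof of Theorem~\ref{thm:infinite} with a carefully chosen finite sample $X\subseteq |K|$ in place of $|K|$ itself, so that the same covering-by-stars and Nerve Lemma argument still works. I would fix any $\varepsilon<\varepsilon_0$ from \eqref{eq:epsilon0}, and aim to produce $X$ for which the collection $\{\rips(X\cap \ost_K(v);\varepsilon)\}_{v\in V(K)}$ is a good cover of $\rips(X;\varepsilon)$ with nerve isomorphic to $K$.

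To construct $X$, for each nonempty face $\sigma\in K$ I would apply Lemma~\ref{lem:crulat} to the bounded star-shaped set $\ost_K(\sigma)$ at scale $\varepsilon$, obtaining a threshold $\delta_\sigma>0$. Since $K$ has finitely many faces, $\delta:=\min_\sigma \delta_\sigma>0$. I would then pick a finite $\delta$-dense subset $Y_\sigma\subseteq \ost_K(\sigma)$ for each $\sigma$ and set $X=\bigcup_{\emptyset\neq\sigma\in K} Y_\sigma$. Then $X$ is finite, and $X\cap \ost_K(\sigma)\supseteq Y_\sigma$ is $\delta$-dense in $\ost_K(\sigma)$ for every nonempty $\sigma$, so by Lemma~\ref{lem:crulat} each $\rips(X\cap \ost_K(\sigma);\varepsilon)$ is nonempty and contractible.

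The three claims in the proof of Theorem~\ref{thm:infinite} would then carry over almost verbatim. The nerve of the cover equals $K$ by \eqref{eq:nerve}, since $\bigcap_{i=1}^t(X\cap \ost_K(v_i))=X\cap \ost_K(\sigma)$ for $\sigma=\{v_1,\ldots,v_t\}$, and this is nonempty exactly when $\sigma\in K$. The pieces and their pairwise and higher intersections $\rips(X\cap \ost_K(\sigma);\varepsilon)$ are contractible by the construction above. Finally, any face $\{x_1,\ldots,x_t\}$ of $\rips(X;\varepsilon)$ has diameter strictly less than $\varepsilon<\varepsilon_0$, so by \eqref{eq:epsilon0} the minimal faces $\sigma_i\in K$ with $x_i\in|\sigma_i|$ share a common vertex $v$, showing $\{x_1,\ldots,x_t\}\in\rips(X\cap \ost_K(v);\varepsilon)$. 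The Nerve Lemma \cite[Theorem 10.6]{Bjorner1995} then yields $\rips(X;\varepsilon)\htpyequiv K$.

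The only real subtlety is ensuring that $X$ is densely sampled in every star simultaneously. This is not automatic from a single $\delta$-dense sample of $|K|$: when $\dim K<n$ the stars $\ost_K(\sigma)$ are lower-dimensional subsets of $\RR^n$, and a generic dense sample of $|K|$ need not restrict densely to them. Sampling $X$ face-by-face with sample sizes dictated by Lemma~\ref{lem:crulat} on each star is the cleanest way around this, and once that density is in place no new ideas beyond those already developed for Theorem~\ref{thm:infinite} are needed.
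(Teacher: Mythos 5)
Your proposal is correct and follows essentially the same route as the paper's proof: fix $\varepsilon<\varepsilon_0$, choose a single $\delta$ that satisfies Lemma~\ref{lem:crulat} for all stars $\ost_K(\sigma)$ simultaneously, take $X$ with $X\cap\ost_K(\sigma)$ $\delta$-dense in every star, and rerun Claims A--C with the Nerve Lemma. The only difference is cosmetic: the paper simply posits a finite set whose intersection with each star is $\delta$-dense, while you make its existence explicit via the face-by-face union $X=\bigcup_\sigma Y_\sigma$, which is a reasonable (and correct) way to address the density subtlety you point out.
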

\begin{proof}
Define $\varepsilon_0$ as in the proof of Theorem~\ref{thm:infinite} and fix any $0<\varepsilon<\varepsilon_0$. With this $\varepsilon$, pick $\delta>0$ small enough to satisfy Lemma~\ref{lem:crulat} simultaneously for all star-shaped sets $\ost_K(\sigma)$, where $\emptyset\neq\sigma\in K$. Let $X\subseteq |K|$ be any finite set such that $X\cap \ost_K(\sigma)$ is $\delta$-dense in $\ost_K(\sigma)$ for all $\emptyset\neq\sigma\in K$.

We now have the following analogues of the claims from the proof of Theorem~\ref{thm:infinite} restricted to the points of~$X$.
\begin{itemize}
\item[(A')] The nerve of the collection of simplicial complexes $\{\rips(X\cap \ost_K(v);\varepsilon)\}_{v\in V(K)}$ is isomorphic to~$K$.
\item[(B')] Each simplicial complex $\rips(X\cap \ost_K(\sigma);\varepsilon)$ is contractible for any $\emptyset\neq \sigma\in K$.
\item[(C')] The collection $\{\rips(X\cap \ost_K(v);\varepsilon)\}_{v\in V(K)}$ is a covering of~$\rips(X;\varepsilon)$.
\end{itemize}
The proofs are identical as in Theorem~\ref{thm:infinite}, with two modifications. For (A') we need to use the fact that $X\cap \ost_K(\sigma)$ is nonempty for all $\emptyset\neq\sigma\in K$. For (B') we apply Lemma~\ref{lem:crulat} instead of \ref{lem:crushing}.
Using the Nerve Lemma we conclude that $\rips(X;\varepsilon)\htpyequiv K$ as before.
\end{proof}

As a corollary we obtain Theorem~\ref{theoremC}.

\begin{proof}[Proof of Theorem~\ref{theoremC}]
For every finitely presented group $G$ there exists a $2$-dimensional polyhedron $M\subseteq \RR^4$ with $\pi_1(M)=G$ \cite{dranivsnikov1993}. Since a polyhedron can be triangulated, we can assume that $M=|K|$ for some simplicial complex $K$ linearly embedded in~$\RR^4$. Theorem~\ref{theoremB} gives a finite set $X\subseteq \RR^4$ with $\rips(X)\htpyequiv K$. In particular $\pi_1(\rips(X))=\pi_1(K)=\pi_1(M)=G$.
\end{proof}

\section{Submanifolds in planar Rips complexes}
\label{sect:submanifold}

Here we show that if $X\subseteq \RR^2$ is a finite set and $\rips(X)$ is a normal pseudomanifold of dimension $d\geq 2$ then it must be the boundary of a crosspolytope. This is motivated by the question \cite[6.(1)]{chambers2010vietoris} about the structure of generators of $H_d(\rips(X))$, and by the attempts to verify that $\rips(X)$ is homotopy equivalent to a wedge of spheres (Problem~\ref{problem:a}).

A \emph{$d$-dimensional normal pseudomanifold} is a pure, connected simplicial complex $K$ of dimension $d$ such that every $(d-1)$-face is contained in precisely two $d$-faces and the link of every face of dimension at most $d-2$ is connected. It is easy to show that for any vertex $v\in V(K)$ the link $\lk_K(v)$ is a $(d-1)$-dimensional normal pseudomanifold. Any $2$-dimensional normal pseudomanifold is a surface without boundary.

The first, yet most important step, is to classify planar Rips complexes which triangulate $2$-dimensional closed surfaces. Note that if $\rips(X)$ triangulates a surface, every edge of $\rips(X)$ belongs to exactly two triangles, and for every $v\in X$ the link $\rips(X_v)$ is a cycle with at least $4$ vertices. The shadow $\shadow(X_v)\subseteq B(v,1)$ is a simple polygon, as self-intersections would yield additional edges by Proposition~\ref{prop:tlemma}. In particular, we can speak of points inside, resp. outside the polygon $\shadow(X_v)$ (i.e. in the bounded, resp. unbounded region of the plane determined by the polygon).

For clarity we first formulate an elementary lemma.
\begin{lemma}
\label{lem:help-help}
Let $\nices$ be a simple polygon in $\RR^2$ and $v$ a point outside or on $\nices$. Suppose further that for every vertex $P$ of $\nices$ all the intersections of the ray $vP^\to$ with $\nices$ lie between $v$ and $P$. 

Then there is an edge $AB$ of $\nices$ such that for any vertex $P$ of $\nices$ the segments $vP$ and $AB$ have non-empty intersection.
\end{lemma}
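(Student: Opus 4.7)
The plan is to identify the \emph{near arc} of $\nices$ as seen from $v$---the set of points $Q\in\nices$ such that $Q$ is the closest intersection of the ray $vQ^\to$ with $\nices$---and show that under the hypothesis this near arc coincides with a single edge of $\nices$, which will be the desired $AB$.

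First I would argue that the hypothesis forces $v$ to lie outside (or on) the convex hull of $\nices$, so that the angular range in which $\nices$ is visible from $v$ is a proper interval $[\theta_L,\theta_R]$, bounded by two vertices $A,B$ at which the corresponding rays from $v$ are tangent to $\nices$. The near arc is then the curve $\theta\mapsto Q_1(\theta)$, $\theta\in[\theta_L,\theta_R]$, traced on $\nices$ from $A$ to $B$.

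The heart of the proof consists of two claims about the near arc. First, no vertex $P$ of $\nices$ lies strictly in the interior of the near arc: if such a $P$ existed, the two edges incident to $P$ would have to lie on opposite sides of $vP^\to$ (so that the nearest intersection can transition continuously through $P$), and the local polygon interior would lie on the far side of $P$ from $v$; hence $vP^\to$ enters the polygon interior at $P$ and, since $\nices$ is bounded, must exit at some point $Q$ strictly beyond $P$, contradicting that $P$ is the last intersection of $vP^\to$ with $\nices$. Second, the near arc has no jump discontinuities: a jump at some angle $\theta_0$ would necessarily occur at a vertex $U$ at which either a new edge becomes the nearest intersection, or the current nearest edge ceases to be intersected by the ray; in either case the ``other'' point of intersection is a simultaneous intersection of $vU^\to$ at strictly greater distance than $U$, again contradicting the hypothesis.

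Combining these facts, the near arc is a continuous, monotone parametrisation of a straight piece of $\nices$ from $A$ to $B$ containing no interior vertex, and so it must coincide with an edge $AB$ of $\nices$. The lemma then follows: if $P\in\{A,B\}$ then $P\in vP\cap AB$ directly, and otherwise $P$ lies off the near arc, so the closest intersection $Q_1(\theta_P)$ of $vP^\to$ with $\nices$ lies on $AB$ and strictly between $v$ and $P$, giving $Q_1(\theta_P)\in vP\cap AB$. The main obstacle I expect is handling degenerate configurations carefully---$v$ lying on $\nices$, several vertices of $\nices$ collinear with $v$, or a tangent ray from $v$ containing an entire edge of $\nices$---where the near arc and its endpoints need to be defined with care.
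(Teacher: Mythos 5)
Your proof takes essentially the same route as the paper's: your \emph{near arc} is exactly the set of points of $\nices$ visible from $v$ used there, and your two claims are the two steps of that argument --- (i) the first-hit map has no jumps, since a jump produces a vertex $U$ together with a point of $\nices$ on the ray $vU^\to$ strictly beyond $U$, contradicting the hypothesis, and (ii) no vertex of $\nices$ lies in the relative interior of the visible set, since at such a vertex the ray would pass into the bounded region and would have to exit it again beyond the vertex. Both are argued correctly, and your closing deduction of the lemma is also correct; note that all it needs is that the visible set is contained in a single closed edge (you never need it to be the whole edge, nor its endpoints to be the vertices $A,B$).

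The one genuine gap is the opening assertion, offered without proof, that the hypothesis forces $v$ to lie outside (or on) the convex hull of $\nices$; the angular interval $[\theta_L,\theta_R]$ and the ``tangent vertices'' $A,B$ are built on top of it. This is not a routine preliminary. When $v$ lies outside $\nices$ but inside a pocket of a non-convex polygon, ruling that situation out is precisely what the visibility analysis accomplishes, and the natural proof of your assertion is as a \emph{corollary} of the lemma (all vertices end up in the closed half-plane across the line through $AB$ from $v$), not as a step toward it; taken verbatim it can even fail in the deferred degenerate case where $v$ lies on $\nices$ (for instance $v$ placed at the reflex vertex of a suitable non-convex quadrilateral), though that case is trivial for the lemma itself. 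Fortunately the assertion is dispensable, and the paper's proof indeed makes no mention of convex hulls: for $v\notin\nices$, the set $\Omega$ of directions in which the ray from $v$ meets $\nices$ is the image of the compact connected set $\nices$ under the angular projection, hence a closed circular arc or the full circle; in either case $\Omega$ is connected, so by your claims (i) and (ii) the visible set is a connected subset of $\nices$, not all of $\nices$ (else every vertex would lie in its relative interior), with no vertex in its relative interior, hence contained in a single closed edge --- which is all the conclusion requires, since for every vertex $P$ the first hit of $vP^\to$ lies on that edge and on the segment $vP$. (A posteriori the full-circle case cannot occur, as a subset of one edge subtends an angle less than $\pi$ at $v$.) With the hull claim deleted and replaced by this connectedness observation, your proof is complete and coincides with the paper's.
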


\begin{proof}
	This is trivial for $v$ on~$\nices$. Otherwise consider the set $C$ of points on~$\nices$ visible from~$v$. Let $\Omega$ be the
	set of angles $\omega$ such that the ray with angle $\omega$ starting at $v$ intersects~$C$. The map  $\Omega \to C$ that 
	maps an angle to the corresponding intersection point is continuous, since a point of discontinuity would be a vertex that 
	contradicts the prerequisites. Thus $C$ is connected. Similar reasoning shows that if $C$ contains part of some open edge
	it must contain the entire edge. Moreover if $C$ contained a vertex $P$ in its relative interior then the ray $vP^\to$ would
	cross into the region bounded by~$\nices$ at $P$ and thus contradict the prerequisites. Thus $C$ consists of a single edge~$AB$.
\end{proof}

Now we come to the main proposition. The proof does not depend on the properties of $\pi_1p$ from Theorem~\ref{thm:chambers} or Theorem~\ref{theoremA}.

\begin{proposition}
\label{prop:surface-cross}
Suppose that $X\subseteq \RR^2$ is a finite set such that $\rips(X)$ is homeomorphic to a surface (without boundary). Then $|X|=6$, and $\rips(X)$ is isomorphic to the boundary complex of the octahedron.
\end{proposition}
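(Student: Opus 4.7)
The plan is to show that every vertex link in $\rips(X)$ is a $4$-cycle, after which a standard Euler-characteristic count forces $|X|=6$ and identifies $\rips(X)$ with the boundary of the octahedron. If every $k_v = 4$, then $\sum_v k_v = 4n = 2e$ yields $e = 2n$ and $t = 4n/3$, so $\chi = n/3 \leq 2$ gives $n \leq 6$; combined with the minimum link-length constraint noted in the excerpt (which forces $n \geq 6$ for a closed surface), we get $n = 6$, $\chi = 2$, and the unique closed-surface triangulation with every link a $4$-cycle on $6$ vertices is the boundary of the $3$-crosspolytope.

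To bound $k_v \leq 4$ at a vertex $v$, I would first argue that $v$ does not lie strictly inside its polygon $P_v = \shadow(X_v)$. Otherwise $v \in \conv(X_v)$ and Carath\'eodory in $\RR^2$ places $v$ in the convex hull of three link vertices $u_1, u_2, u_3$. Since $\rips(X_v)$ is a $1$-dimensional cycle (containing no Rips triangle), not all three pairs $|u_iu_j|$ can be less than~$1$, yet combined with $|vu_l| < 1$ this leads, after selecting the triple carefully, to a contradiction via Proposition~\ref{prop:tlemma} or Proposition~\ref{prop:beer}. With $v$ outside $P_v$ established, the hypothesis of Lemma~\ref{lem:help-help} needs verification: a ray $vv_i^{\to}$ re-entering $P_v$ past $v_i$ at a point $Q$ on a polygon edge $v_jv_{j+1}$ would place short segments in a configuration contradicting the non-cycle-adjacency constraints of the link via Proposition~\ref{prop:tlemma}.

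Lemma~\ref{lem:help-help} now yields an edge $AB = v_av_{a+1}$ of $P_v$ crossed by every segment $vv_i$. This edge is contained in exactly two triangles of $\rips(X)$, namely $\{v, v_a, v_{a+1}\}$ and $\{v_a, v_{a+1}, w\}$ for a unique $w$. No link vertex of $v$ can be cycle-adjacent to both $v_a$ and $v_{a+1}$ in a cycle of length $\geq 4$, so $w \notin X_v$ and hence $|vw| \geq 1$. Suppose now for contradiction $k_v \geq 5$. Then there exists a link vertex $v_i \notin \{v_{a-1}, v_a, v_{a+1}, v_{a+2}\}$, so $|v_iv_a| \geq 1$ and $|v_iv_{a+1}| \geq 1$. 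Apply Proposition~\ref{prop:beer} to the triangle $\{v_a, w, v_{a+1}\}$ (all sides less than~$1$) and the segment $\{v, v_i\}$ (length less than~$1$); their convex hulls meet because $vv_i$ crosses the side $v_av_{a+1}$ of the triangle. Each of the five candidate apices fails: $v$ and $w$ by $|vw| \geq 1$; $v_a$ and $v_{a+1}$ by the distances $|v_iv_a|, |v_iv_{a+1}| \geq 1$; and $v_i$ for the same reason. This contradicts the conclusion of Proposition~\ref{prop:beer}, so $k_v \leq 4$, hence $k_v = 4$.

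The main obstacle I expect is the two geometric preliminaries: showing that no vertex of a closed-surface Rips complex lies strictly inside its own polygon $P_v$, and verifying the hypothesis of Lemma~\ref{lem:help-help} for such configurations. Both require fine-grained case analysis using Propositions~\ref{prop:tlemma} and~\ref{prop:beer}, but once in place the Proposition~\ref{prop:beer} argument settles $k_v = 4$ uniformly at every vertex, and Euler's formula completes the classification.
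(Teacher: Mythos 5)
Your proposal follows the paper's architecture in its second half: the appeal to Lemma~\ref{lem:help-help}, the identification of the second triangle $v_av_{a+1}w$ on the visible edge with $w\notin X_v$, and the no-apex contradiction via Proposition~\ref{prop:beer} when $|X_v|\geq 5$ are exactly the paper's argument (your Euler-characteristic count at the end is a harmless variant of the paper's edge count $e=2n$ versus $e\geq 3n-6$). The problem is the first step, and the gap there is genuine.

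You claim that ``$v$ strictly inside $\shadow(X_v)$'' can be refuted \emph{locally}: Carath\'eodory gives $v\in\conv\{u_1,u_2,u_3\}$ with all $|vu_i|<1$, the induced cycle forces some $|u_iu_j|\geq 1$, and ``after selecting the triple carefully'' Proposition~\ref{prop:tlemma} or~\ref{prop:beer} yields a contradiction. No such contradiction exists, because the configuration you are trying to exclude is locally realizable: take $v=(0,0)$, $u_1=(-0.6,0)$, $u_2=(0,0.55)$, $u_3=(0.6,0)$, $u_4=(0,-0.55)$. Then the link of $v$ is the induced $4$-cycle $u_1u_2u_3u_4$ (since $|u_1u_3|,|u_2u_4|>1$ and all other relevant distances are $<1$), and $v$ lies strictly inside the polygon $\shadow(X_v)$; the Rips complex of these five points is simply the cone over the $4$-cycle, i.e.\ a disk, and no metric statement is violated. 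Hence every triple you could select ($\{u_1,u_2,u_3\}$, $\{u_1,u_2,u_4\}$, etc.) is consistent with all the propositions in the paper, and no argument that only looks at $X^v$ can rule out $v$ lying inside its link polygon. What rules it out is the \emph{global} hypothesis that $\rips(X)$ is a closed surface, and the paper uses it homologically: the link $\rips(X_v)$ is null-homologous in the deletion $\rips(X-v)$; after checking $v\notin\shadow(X-v)$ (a short diameter argument), projecting gives a null-homologous map $\rips(X_v)\to\RR^2-v$; but if $v$ were inside the polygon this same map would factor as a homeomorphism onto $\shadow(X_v)$ followed by the homotopy equivalence $\shadow(X_v)\incl\RR^2-v$, a contradiction. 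Some such global input is indispensable, and your proposal has none. A secondary, fixable issue: your verification of the hypothesis of Lemma~\ref{lem:help-help} via Proposition~\ref{prop:tlemma} does not quite work either (the segments $vv_i$ and $v_jv_{j+1}$ do not themselves cross, and in any crossing involving $v$ the point $v$ is automatically an apex, so the proposition yields nothing); the correct argument is that $v_i\in\conv\{v,v_j,v_{j+1}\}$ forces all pairwise distances in $\{v,v_i,v_j,v_{j+1}\}$ below $1$ by convexity of the distance function, producing a $3$-simplex and contradicting $2$-dimensionality.
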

\begin{proof}
We will show that $|X_v|=4$ for any $v\in X$. If this is true, then $\rips(X)$ has $2|X|$ edges, while a triangulated surface has at least $3|X|-6$ edges, yielding $|X|\leq 6$. The only flag surface triangulation with at most $6$ vertices is the boundary of the octahedron, and that observation completes the proof.

Pick any $v\in X$. Suppose that $v$ lies strictly inside the polygon $\shadow(X_v)$. We first show that $v\not\in\shadow(X-v)$. Otherwise, there would be points $A,B,C\in X-v$ such that $\diam\{A,B,C\}<1$ and $v\in \conv\{A,B,C\}$. Since $\shadow(X_v)$ is just a cycle, not passing through $v$, not all of $A,B,C$ can be in $X_v$, say $A\not\in X_v$. But then $1>\diam\{A,B,C\}=\diam(\conv\{A,B,C\})\geq |Av|\geq 1$, a contradiction. Since $\rips(X)$ is a triangulated surface, the inclusion $\rips(X_v)\incl \rips(X-v)$ is null-homologous, hence so is the composition
$$\rips(X_v)\incl \rips(X-v)\xrightarrow{p} \shadow(X-v)\incl \RR^2- v.$$
However, this map is equal to the composition
$$\rips(X_v)\xrightarrow{p}\shadow(X_v)\incl\RR^2-v$$
of a homeomorphism followed by a homotopy equivalence (by the assumption that $v$ lies inside the polygon $\shadow(X_v)$). This is a contradiction, which shows that in fact $v$ must lie outside (or on some edge of) the simple polygon $\shadow(X_v)$.

Suppose that there is a point $P\in X_v$ such the ray $vP^{\to}$ intersects some edge $AB$ of $\shadow(X_v)$ with $A,B\neq P$, and so that $v$, $P$, and the intersection point lie on $vP^\to$ in this order (Figure~\ref{fig:simplepolygons}, left). Then $P\in\conv\{v,A,B\}$, therefore $vPAB$ is a simplex in $\rips(X)$, contradicting the fact that $\rips(X)$ is $2$-dimensional.

\begin{figure}
\includegraphics{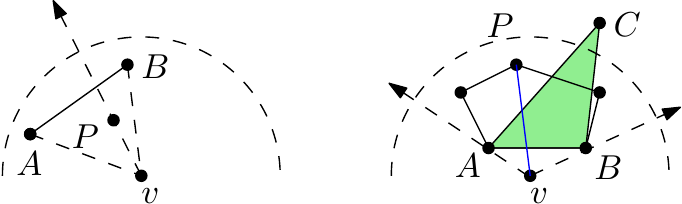}
\caption{The possible layouts of $\shadow(X_v)$ in the proof of Proposition~\ref{prop:surface-cross}.}
\label{fig:simplepolygons}
\end{figure}

We conclude that for any $P\in X_v$ the part of the ray $vP^\to$ which extends beyond $P$ does not intersect $\shadow(X_v)$. Since $v$ lies outside (or on) $\shadow(X_v)$, we are in the situation of Lemma~\ref{lem:help-help}. We conclude that the only part of $\shadow(X_v)$ visible from $v$ is a single edge $AB$ (Figure~\ref{fig:simplepolygons}, right) and for all $P\in X_v$ we have $\conv\{v,P\}\cap\conv\{A,B\}\neq\emptyset$.

To obtain the final contradiction suppose that $|X_v|\geq 5$. Then there is a point $P\in X_v$ with $|PA|,|PB|\geq 1$. The edge $AB$ belongs to two faces of $\rips(X)$: one of them is $ABv$ and the other is $ABC$ for some $C\neq A,B,v$, necessarily with $C\not\in X_v$. As noted above, the segment $\conv\{v,P\}$ and the triangle $\conv\{A,B,C\}$ intersect along $\conv\{A,B\}$. The combination of constraints
$$|PA|\geq 1,\ |PB|\geq 1,\ |vC|\geq 1,\ \diam\{A,B,C\}<1,\ \diam\{P,v\}<1$$
means that the configuration $\{A,B,C,P,v\}$ has no apex in the sense of Definition~\ref{def:apex}, although it satisfies the assumptions of Proposition~\ref{prop:beer} (or even its planar version \cite[Prop.2.2]{chambers2010vietoris}). This contradiction implies $|X_v|\leq 4$, hence in fact $|X_v|=4$, and the proof is complete.
\end{proof}

We can inductively classify normal pseudomanifolds which can appear as an \emph{induced} subcomplex of a planar Rips complex. Let $\niceo_d$ denote the boundary of the $(d+1)$-dimensional crosspolytope. For example, $\niceo_1$ is the cycle on $4$ vertices, $\niceo_2$ is the boundary of the octahedron, and in general $\niceo_d$ is the suspension of~$\niceo_{d-1}$.

\begin{theorem}[Theorem~\ref{theoremD}]
\label{thm:planar-alldim}
For any $d\geq 2$ and any finite subset $X\subseteq \RR^2$, if $\rips(X)$ is a $d$-dimensional normal pseudomanifold then $\rips(X)$ is isomorphic to~$\niceo_d$.
\end{theorem}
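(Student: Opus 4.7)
The plan is to proceed by induction on the dimension $d \geq 2$. The base case $d = 2$ is precisely Proposition~\ref{prop:surface-cross}, since a $2$-dimensional normal pseudomanifold is a closed surface without boundary. For the inductive step with $d \geq 3$, pick any $v \in X$; the link $\rips(X_v)$ is a $(d-1)$-dimensional normal pseudomanifold with vertex set $X_v \subseteq \RR^2$, so the inductive hypothesis gives $\rips(X_v) \cong \niceo_{d-1}$. The same conclusion applies to every vertex $u \in X$, yielding strong local structure: each vertex has exactly $2d$ neighbors, partitioned into $d$ antipodal pairs of non-neighbors inside its link.

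From here the argument becomes purely combinatorial: I will show that $X = X^v \cup \{w\}$ for a single extra vertex $w$, which forces the $1$-skeleton of $\rips(X)$ to be the cocktail-party graph $K_{(d+1)\times 2}$, and hence $\rips(X) \cong \niceo_d$ by flagness. To construct $w$, for each neighbor $u \in X_v$ the link $\rips(X_u) \cong \niceo_{d-1}$ contains $v$, so let $w_u \in X_u$ be the unique antipode of $v$ in this copy of $\niceo_{d-1}$ (the unique non-neighbor of $v$). The main step is to check that $w_u$ is independent of the choice of $u$. Pick $u_1, u_2 \in X_v$ adjacent in $\rips(X_v)$: inside $\rips(X_{u_1}) \cong \niceo_{d-1}$ every vertex other than $v$ is adjacent to $w_{u_1}$, so in particular $u_2$ is, which gives $w_{u_1} \in X_{u_2}$. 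Then $w_{u_1}$ is a non-neighbor of $v$ in $\rips(X_{u_2}) \cong \niceo_{d-1}$, and uniqueness of antipodes there forces $w_{u_1} = w_{u_2}$. Because the $1$-skeleton of $\niceo_{d-1}$ is connected for $d \geq 3$, propagating along edges of $\rips(X_v)$ yields a well-defined $w := w_u$. This antipode-propagation step is the main obstacle, since it rests on the precise link-of-link structure and the uniqueness of antipodes in a cross-polytope.

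With $w$ fixed, $X_v \subseteq X_w$ together with $|X_v| = 2d = |X_w|$ forces $X_w = X_v$. For each $u \in X_v$, the $2d$ vertices of $\rips(X_u) \cong \niceo_{d-1}$ decompose as the antipodal pair $\{v, w\}$ plus the $2d-2$ common neighbors of $u$ and $v$ lying in $X_v$; hence $X_u \subseteq X^v \cup \{w\}$. Suppose there existed a vertex $w' \in X \setminus (X^v \cup \{w\})$. Then $w'$ would be non-adjacent to $v$ (by choice), to $w$ (since $X_w = X_v$), and to every $u \in X_v$ (since $X_u \subseteq X^v \cup \{w\}$), disconnecting $w'$ from $v$ in $\rips(X)$ and contradicting connectedness of the normal pseudomanifold. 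Therefore $X = X^v \cup \{w\}$ has $2(d+1)$ elements, the non-neighbor pairing on $X$ consists of $\{v, w\}$ together with the $d$ antipodal pairs in $X_v$, the $1$-skeleton equals $K_{(d+1)\times 2}$, and flagness of $\rips(X)$ completes the induction with $\rips(X) \cong \niceo_d$.
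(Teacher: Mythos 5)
Your proof is correct, and its skeleton coincides with the paper's: induction on $d$, with the base case $d=2$ handled by Proposition~\ref{prop:surface-cross} (since a $2$-dimensional normal pseudomanifold is a closed surface) and the inductive step reducing, via the standard fact that vertex links of normal pseudomanifolds are normal pseudomanifolds, to the statement that a complex all of whose vertex links are isomorphic to $\niceo_{d-1}$ must be isomorphic to $\niceo_d$. The genuine difference is how that last statement is handled: the paper simply cites an external result (\cite[Theorem~2.1]{deza}), whereas you prove it from scratch for \emph{flag} complexes via your antipode-propagation argument. That argument is sound: because Rips complexes and their links are flag, non-adjacency inside each link $\rips(X_u)\cong\niceo_{d-1}$ agrees with non-adjacency in $\rips(X)$, so the antipode $w_u$ of $v$ is well defined; uniqueness of antipodes plus connectivity of the $1$-skeleton of $\niceo_{d-1}$ (valid since $d\geq 3$) makes $w_u$ independent of $u$; and the counting steps ($X_w=X_v$, $X_u\subseteq X^v\cup\{w\}$, no further vertices by connectedness) correctly pin down $X=X^v\cup\{w\}$, after which flagness converts the cocktail-party $1$-skeleton $K_{(d+1)\times 2}$ into $\niceo_d$. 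What your route buys is self-containedness, at the cost of invoking flagness --- harmless here, since every Rips complex is a clique complex, though the cited theorem needs no such hypothesis. In both proofs the content specific to planar Rips complexes is concentrated entirely in Proposition~\ref{prop:surface-cross}, which you use exactly as the paper does.
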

\begin{proof}
A $2$-dimensional normal pseudomanifold triangulates a surface, hence the case $d=2$ follows from Proposition~\ref{prop:surface-cross}. Let $d\geq 3$. All vertex links $\rips(X_v)$ are $(d-1)$-dimensional normal pseudomanifolds, and by induction they are all isomorphic to $\niceo_{d-1}$. It is easy to show that the only complex in which all vertex links are isomorphic to $\niceo_{d-1}$ is $\niceo_d$; for example, see~\cite[Theorem~2.1]{deza}.
\end{proof}

\begin{example}
\label{ex:crosspolytope}
Of course every crosspolytopal sphere can be realized in this fashion: we have $\niceo_d=\rips(X_{2(d+1)})$ for $X_{2(d+1)}$ the vertex set of a regular $2(d+1)$-gon inscribed in a circle of radius $\frac12$. For $d=2$ an isomorphic $6$-point configuration appears in Figure~\ref{fig:Xv-example}.
\end{example}

\section{Closing remarks}
\label{sect:closing}
We close the paper with some conjectures. The first one is a strengthening of Proposition~\ref{prop:pi0epi}.
\begin{conjecture}
\label{conj:pi1-3}
For any finite subset $X\subseteq \RR^3$ and any $v\in X$ the inclusion map $\shadow(X_v)\incl \shadow(X^v)\cap\shadow(X-v)$ is $1$-connected.
\end{conjecture}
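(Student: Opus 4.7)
The plan is to extend the path-pushing argument of Proposition~\ref{prop:pi0epi} to a higher-connectivity statement using a nerve-theoretic reduction. First I would cover $\shadow(X^v)\cap\shadow(X-v)$ by the convex polytopes
\[C_{\nicea,\niceb}:=\conv\nicea\cap\conv\niceb,\]
indexed by non-empty sets $\nicea\subseteq X^v$ and $\niceb\subseteq X-v$ of size at most $4$ and diameter strictly less than $1$, with $C_{\nicea,\niceb}\neq\emptyset$. All finite intersections of these polytopes are convex, hence contractible, so this is a good cover and the Nerve Lemma identifies $\shadow(X^v)\cap\shadow(X-v)$ with its nerve $N$ up to homotopy equivalence. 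The analogous cover of $\shadow(X_v)$ by pieces with $\nicea,\niceb\subseteq X_v$ realizes $\shadow(X_v)$ as a subcomplex $N_0\subseteq N$, and the conjecture reduces to showing the inclusion $N_0\incl N$ is $1$-connected.

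For $\pi_0$-injectivity, it suffices to prove that whenever two pieces $C_{\nicea,\niceb}$ and $C_{\nicea',\niceb'}$ share a point $x$, the apex constructions of Proposition~\ref{prop:pi0epi} can be made to yield apex vertices in the same component of $\shadow(X_v)$. I would establish this as a \emph{coherence} strengthening of Proposition~\ref{prop:beer}: applied to the combined configuration $\nicea\cup\nicea'\cup\niceb\cup\niceb'$ together with the shared point $x$, the perpendicular-bisector pigeonhole argument from the Claim inside Proposition~\ref{prop:beer} should force a consistent choice of apex in $X_v$.

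For $\pi_1$-surjectivity, I would decompose a loop $\gamma$ in the intersection into short arcs $\gamma_i$, each contained in a single piece $C_i$ of the cover. At each break point $x_i$, Proposition~\ref{prop:pi0epi} yields a path $\alpha_i$ to some $q_i\in\shadow(X_v)$, and the coherence statement above supplies paths $\beta_i\subseteq\shadow(X_v)$ connecting $q_i$ to $q_{i+1}$. Each small sub-loop $\alpha_i\cdot\gamma_i\cdot\alpha_{i+1}^{-1}\cdot\beta_i^{-1}$ bounds a disk in the contractible region swept out inside $C_i\cup C_{i+1}$ together with the pushing paths, so is null-homotopic in the intersection, reducing $\gamma$ to the loop $\beta_1\cdots\beta_k\subseteq\shadow(X_v)$.

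The hardest step will be establishing the coherence lemma underlying the previous two paragraphs. Proposition~\ref{prop:beer} guarantees one apex per five-point configuration, but here one needs compatible apex choices for two configurations sharing an intersection point, and also for triples of configurations to control the $2$-cells of the nerve. Because the conjecture fails already in dimension $4$ (the configuration of Remark~\ref{rem:dim4fail} easily produces loops of the intersection not homotopic into $\shadow(X_v)$), the argument must exploit specifically $3$-dimensional geometry, most likely via a refined case analysis extending Proposition~\ref{prop:tetrahedra} that tracks how two tetrahedral intersections overlap in $\RR^3$.
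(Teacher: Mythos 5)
You should first be aware that the statement you are trying to prove is not proved in the paper at all: it is stated as Conjecture~\ref{conj:pi1-3} in the closing section and left open (the authors only note that, via Proposition~\ref{prop:mayer} and diagram~\eqref{eq:diagram}, it would upgrade Theorem~\ref{theoremA} to a $\pi_1$-isomorphism). So there is no proof to compare against, and your proposal would have to stand as a complete, self-contained argument. It does not: it is a strategy outline whose two load-bearing steps are exactly the open geometric content of the conjecture.

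Concretely: (1) Your ``coherence lemma'' is never proved, only asserted (``should force a consistent choice of apex''). Proposition~\ref{prop:beer} produces an apex for a \emph{single} five-point configuration, and the case analysis of Proposition~\ref{prop:pi0epi} converts that apex into \emph{some} point of $X_v$; nothing in those arguments controls whether two configurations $(\nicea,\niceb)$ and $(\nicea',\niceb')$ sharing a point push to the \emph{same component} of $\shadow(X_v)$, and extending the bisector/pigeonhole argument to a ten-point configuration $\nicea\cup\nicea'\cup\niceb\cup\niceb'$ is a genuinely harder combinatorial-geometric problem that you defer rather than solve. This is needed both for $\pi_0$-injectivity and to define your paths $\beta_i$ at all. (2) The $\pi_1$-surjectivity step is circular: you assert that each sub-loop $\alpha_i\cdot\gamma_i\cdot\alpha_{i+1}^{-1}\cdot\beta_i^{-1}$ ``bounds a disk in the contractible region swept out inside $C_i\cup C_{i+1}$ together with the pushing paths,'' but that region (two convex bodies plus three arcs, one of which wanders through $\shadow(X_v)$ far from $C_i\cup C_{i+1}$) has no reason to be simply connected, let alone contractible; establishing that these sub-loops are null-homotopic in $\shadow(X^v)\cap\shadow(X-v)$ \emph{is} the conjecture, not a consequence of convexity. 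Two smaller points: the nerve-theoretic reduction to $N_0\incl N$ requires the naturality of the nerve equivalence with respect to the subcover (true, but it needs to be invoked, e.g.\ via homotopy colimits); and Remark~\ref{rem:dim4fail} exhibits a failure of $\pi_0$-surjectivity in $\RR^4$ (an isolated extra component of the intersection), not loops that fail to be homotopic into $\shadow(X_{v_0})$, so your parenthetical mischaracterizes it even though your conclusion---that any proof must use specifically $3$-dimensional geometry---is correct.
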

Using the method of Section~\ref{sect:topology} this conjecture would imply that for finite $X\subseteq\RR^3$ the projection $p\colon\rips(X)\to\shadow(X)$ is $2$-connected, hence a $\pi_1$-isomorphism (and even a $\pi_2$-surjection, a statement which is vacuously true for $X\subseteq \RR^2$). 

We also believe that $\pi_1$-injectivity holds in general.
\begin{conjecture}
\label{conj:pi1-inj}
The projection $p$ induces a $\pi_1$-injection in all dimensions.
\end{conjecture}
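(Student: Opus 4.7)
The natural plan is to proceed by induction on $|X|$, using the pushout square in diagram~\eqref{eq:diagram} as the organizing tool. Fix $v \in X$ and assume the conjecture holds for $X-v$. Since $\rips(X^v)$ and $\shadow(X^v)$ are contractible (Lemma~\ref{lem:stars}), Seifert--van Kampen applied to the top and bottom faces yields
\[
\pi_1(\rips(X)) \iso \pi_1(\rips(X-v))/N_R, \qquad \pi_1(\shadow(X)) \iso \pi_1(\shadow(X-v))/N_S,
\]
where $N_R$ is the normal closure of the image of $\pi_1(\rips(X_v))$ and $N_S$ is the normal closure of the image of $\pi_1(\shadow(X^v) \cap \shadow(X-v))$. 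Combining the inductive hypothesis ($\pi_1$-injectivity of $p$ on $\rips(X-v)$) with a diagram chase, the inductive step reduces to establishing the following $\pi_1$-refinement of Proposition~\ref{prop:pi0epi}:
\[
(*) \quad \text{every loop in } \shadow(X^v) \cap \shadow(X-v) \text{ is freely homotopic in } \shadow(X-v) \text{ to } p(\widetilde{\gamma}) \text{ for some loop } \widetilde{\gamma} \text{ in } \rips(X_v).
\]
For the diagram chase to run smoothly one also typically needs a cooperating $\pi_1$-surjectivity statement for $p$, which is essentially Conjecture~\ref{conj:pi1-3} in dimension three.

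In dimension $2$, statement $(*)$ should follow by upgrading the case analysis of Section~\ref{sect:local} from paths to $2$-disks, with Proposition~\ref{prop:tlemma} as the geometric input; this route recovers the known $\pi_1$-isomorphism of Theorem~\ref{thm:chambers}(b). In dimension $3$, proving $(*)$ together with Conjecture~\ref{conj:pi1-3} would give the full result by the same induction; the case analysis of Proposition~\ref{prop:pi0epi} has to be carried out for free-homotopy classes of loops in the intersection rather than for pairs of points. The hard case is $n \geq 4$, where already the $\pi_0$-analog of $(*)$ fails by Remark~\ref{rem:dim4fail}, so the vertex-inductive scheme breaks down.

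A plausible alternative in high dimensions is to argue directly by PL approximation. Represent a class $[\alpha] \in \ker(p_*)$ by a loop in the $1$-skeleton of $\rips(X)$ whose geometric image bounds a PL disk $D \subseteq \shadow(X) \subseteq \RR^n$; triangulate $D$ finely enough that each small triangle maps into some $\conv(Y)$ with $\diam(Y) < 1$; and attempt to assemble the resulting local simplicial lifts into a single simplicial disk in $\rips(X)$ bounding $\alpha$. The main obstacle, and the reason Conjecture~\ref{conj:pi1-inj} remains open, lies in the gluing step: given two simplices $\conv(Y_i), \conv(Y_j)$ that intersect geometrically in $\RR^n$, one needs a higher-dimensional combinatorial bridge joining $Y_i$ to $Y_j$ inside $\rips(X)$, in the spirit of the apex conclusion of Proposition~\ref{prop:beer}. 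Remark~\ref{rem:funky-metric} already warns that apex-type statements are delicate, and no higher-dimensional analogue is known or even expected in full generality. The hard part, therefore, is to find a suitable combinatorial intersection-structure theorem in $\RR^n$ for $n \geq 4$; without one, neither route closes.
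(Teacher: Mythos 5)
There is a genuine gap here, and in fact it could not be otherwise: the statement you are addressing is Conjecture~\ref{conj:pi1-inj}, an \emph{open problem} in the paper --- the authors offer no proof, and your proposal does not supply one either. What you have written is a survey of strategies, each of which terminates in an unproven auxiliary statement: in dimensions $2$ and $3$ everything is conditional on $(*)$, a $\pi_1$-refinement of Proposition~\ref{prop:pi0epi} that you assert ``should follow'' from upgrading the path-connectivity case analysis to disks but never carry out (and this upgrade is exactly where the difficulty lies --- the case analysis in Proposition~\ref{prop:pi0epi} constructs single connecting segments, and there is no evident way to make those choices cohere along a homotopy of loops); in dimension $3$ you additionally assume Conjecture~\ref{conj:pi1-3}, which is also open; and for $n\geq 4$ you concede that the vertex-induction collapses (Remark~\ref{rem:dim4fail}) and that the alternative PL-approximation route requires a ``combinatorial intersection-structure theorem'' which, as you yourself note, is not known and perhaps not true. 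A proof that is conditional at every branch on statements nobody has proven is not a proof; by your own closing sentence ``neither route closes.''

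Two further technical points you should be aware of if you pursue the inductive scheme. First, your application of Seifert--van Kampen to the two pushout faces of diagram~\eqref{eq:diagram} tacitly assumes the relevant intersections are path-connected and nonempty; but $\rips(X_v)$ can be empty or disconnected, and $\shadow(X^v)\cap\shadow(X-v)$ can be disconnected even when $\shadow(X_v)$ is connected (this is precisely the content of Remark~\ref{rem:dim4fail}), so one needs the groupoid version of van Kampen or a careful accounting of components and basepoints --- this is why the paper works with $k$-connected maps of triads and Proposition~\ref{prop:mayer} rather than with presentations of fundamental groups. Second, even granting $(*)$, your ``diagram chase'' silently uses $\pi_1$-\emph{surjectivity} of $p$ on $\shadow(X-v)$ to pull conjugating elements of $N_S$ back to $\rips(X-v)$; in dimension $n\geq 4$ this surjectivity is \emph{false} by Theorem~\ref{thm:chambers}(c), so the chase cannot run in exactly the dimensions where the conjecture is not already essentially settled by known results. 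Identifying these obstacles is worthwhile, but the honest conclusion is that you have restated why the conjecture is open, not resolved it.
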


Finally, this is perhaps a good point to mention the following question that has been around for some time now.
\begin{problem}
\label{problem:a}
Can we characterize the homotopy types of Rips complexes $\rips(X)$ for $X\subseteq\RR^2$? In particular, are they always homotopy equivalent to wedges of spheres?
\end{problem}

Theorem~\ref{thm:planar-alldim} might be taken as some evidence in favor of an affirmative answer to this question.

\bibliographystyle{plain}
\bibliography{rips}

\end{document}